\patchcmd\maketitle{\setcounter{footnote}{0}}{}{}{}
\patchcmd\maketitle{%
	\renewcommand\thefootnote{\@fnsymbol\c@footnote}}{\AdaptNote\thanks\multthanks}{}{}
\patchcmd\maketitle{%
	\def\@makefnmark{\rlap{\@textsuperscript{\normalfont\@thefnmark}}}}{}{}{}
\def\@textbottom{\vskip \z@ \@plus 200pt}
\let\@texttop\relax
\numberwithin{theorem}{section}
\newtheorem{rem}{Remark}
\newtheorem{assump}{Assumption}
\newtheorem{expl}{Example}
\title{An All-at-Once Preconditioner for Evolutionary Partial Differential Equations\thanks{This research was supported by research Grants, 12200317, 12300218, 12300519, 17201020
		from HKRGC GRF and 11801479 from NSFC}}
\author{Xue-lei Lin\thanks{Shenzhen JL Computational Science and Applied Research Institute, Shenzhen, P.R. China., and 
Beijing Computational Science Research Center, Beijing 100193, China.}
	\and
	Michael K. Ng\thanks{Department of Mathematics, The University of Hong Kong}}
\begin{document}	
		\maketitle	
		\begin{abstract}
			In [McDonald, Pestana and Wathen, \textit{SIAM J. Sci. Comput.}, 40 (2018), pp. A1012--A1033], a block circulant preconditioner is proposed for all-at-once linear systems arising from evolutionary partial differential equations, in which the preconditioned matrix is proven to be diagonalizable and to have identity-plus-low-rank decomposition in the case of the heat equation. In this paper, we generalize the block circulant preconditioner by introducing a small parameter $\epsilon>0$ into the top-right block of the block circulant preconditioner. The implementation of the generalized preconditioner requires the same computational complexity as that of the block circulant one.
			Theoretically, we prove that (i) the generalization preserves the diagonalizability and the identity-plus-low-rank decomposition; (ii) all eigenvalues of the new preconditioned matrix are clustered at 1 for sufficiently small $\epsilon$; (iii) GMRES method for the preconditioned system has a linear convergence rate independent of size of the linear system when $\epsilon$ is taken to be smaller than or comparable to square root of time-step size. Numerical results are reported to confirm the efficiency of the proposed preconditioner and to show that the generalization improves the performance of block circulant preconditioner.
		\end{abstract}
		
		\begin{keywords}
		Evolutionary equations; all-at-once discretization; convergence of GMRES; block Toeplitz matrices; preconditioning technique
		\end{keywords}	
		\begin{AMS}
			65F08; 65F10; 15B05; 65M22
		\end{AMS}		
	\section{Introduction}\label{introduction}
  In this paper, we are particularly interested in evolutionary partial differential equations (PDEs) with first order temporal derivative. Classical time-stepping method solve evolutionary PDEs one time step after one time step (i.e., in a fully sequential manner), which would be time-consuming if the number of time steps are large. This motivates the development of parallel-in-time (PinT) methods for evolutionary PDEs during the last two decades. Among these, we mention the parareal algorithm \cite{lions2001661} and a closely related algorithm  multigrid-reduction-in-time (MGRiT) algorithm \cite{falgout2014parallel}, which attract considerable attention in recent years. Convergence of parareal algorithm and MGRiT are respectively justified in \cite{gander2007analysis} and \cite{dobrev2017two}. Many efforts are devoted to improving these two PinT algorithms and in particular the authors \cite{wu2018toward} and \cite{wu2019acceleration} proposed a novel coarse grid correction, which shows great potential for increasing the speedup according to the numerical results in \cite{kwok2019schwarz}. There are also many another PinT algorithms with completely different mechanism from parareal algorithm and MGRiT, such as the space-time multigrid algorithms \cite{gander2016analysis,hackbusch1985parabolic,horton1995space} and the diagonalization-based all-at-once algorithms \cite{gander2016direct,mcdonald2018preconditioning,mcdonald2017preconditioning,wathen2019note,gander2019convergence}. For an overview, we refer the interested reader to \cite{gander201550}.
 
 Recently, McDonald, Pestana and Wathen in \cite{mcdonald2018preconditioning} proposed a block circulant preconditioner to accelerate the convergence of Krylov subspace methods for solving the all-at-once linear system arising from backward-difference time discretization of evolutionary PDEs. It is interesting that the preconditioned system in \cite{mcdonald2018preconditioning} is diagonalizable in the case of the heat equation although the original all-at-once system is not diagonalizable, which would be useful in aspects of theoretical
 convergence analysis.
 %Such diagonalizability may not be straightforwardly useful in the actual implementation %as the eigenvector-matrix may be complicated. 
Moreover, the preconditioned matrix in \cite{mcdonald2018preconditioning} has an identity-plus-low-rank decomposition, which is usually related to fast convergence of the GMRES method. However, in \cite{mcdonald2018preconditioning}, the convergence of GMRES for the preconditioned system has not been proven to be independent of spatial discretization step-size yet.
 
 In this paper, we generalize the block circulant preconditioner proposed in \cite{mcdonald2018preconditioning} by introducing a parameter $\epsilon>0$ into the top-right block of the block circulant preconditioner. We call the generalized preconditioner by block $\epsilon$-circulant (BEC) preconditioner (when $\epsilon=1$, the BEC preconditioner is identical to block circulant preconditioner).
 Theoretically, we show that (i) the generalization preserves the diagonalizability and the identity-plus-low-rank decomposition; (ii) all eigenvalues of the preconditioned matrix by BEC preconditioner are clustered at 1 for sufficiently small $\epsilon$; (iii) GMRES method (restarted or non-restarted) for the preconditioned system has a linear convergence rate independent of both temporal and spatial step-sizes when $\epsilon$ is taken to be smaller than or comparable to square root of the temporal-step size. 
 When using Krylov subspace methods to solve the preconditioned linear system, it requires to compute the inverse of the block $\epsilon$-circulant preconditioner multiplied with some given vectors. To compute the matrix-vector multiplication efficiently, we resort to the fact that the block $\epsilon$-circulant preconditioner is diagonalizable by means of fast Fourier transform (FFT) with each eigen-block having the same size as that of the spatial discretization matrix. That means, to compute the inverse of BEC preconditioner multiplying a given vector is equivalent to solving a block diagonal linear system in Fourier domain. If the spatial term is the Laplace operator and the uniform spatial grid is employed, then the diagonal blocks of the block diagonal linear system are further diagonalizable by fast sine transform (FST), due to which the computation of the inverse of the BEC preconditioner times a vector is fast and exact. If the spatial term consists of some more general differential operators, then we resort to some efficient iterative solvers (e.g., multigrid method) as inner spatial solver. The details of the implementation of the preconditioned matrix times a vector are given in Section \ref{implementation}, which shows that the total storage of the proposed implementation is proportional to the number of unknowns and the total computational cost of the proposed implementation is proportional to the number of unknowns multiplied with its logarithm. GMRES method is employed to solve the preconditioned linear system. Numerical results for heat equation and convection-dominated convection diffusion are reported to show that the BEC preconditioner is efficient and it improves the performance of block circulant preconditioner.
 
 The outline of this paper is organized as follows. In Section \ref{discretization}, the all-at-once linear system arising from an evolutionary PDE is presented. In Section \ref{preconditioning}, the BEC preconditioner is proposed, the properties of the preconditioned system and convergence of GMRES for the preconditioned system are analyzed. In Section \ref{implementation}, the implementation of the preconditioned matrix-vector multiplication and the complexity of GMRES method are discussed. In Section 5, Numerical results are reported. Finally, concluding remarks are given in Section 6.
 \section{The All-at-Once System for Evolutionary PDEs}\label{discretization}
 As in \cite{mcdonald2018preconditioning}, we start with the following heat equation to describe our method clearly:
 \begin{align}
 &\partial_t u({\bf x},t)=\nabla(a({\bf x})\nabla u({\bf x},t))+f({\bf x},t),\quad ({\bf x},t)\in\Omega\times(0,T],\quad \Omega\subset \mathbb{R}^{2}{\rm~or~}\mathbb{R}^{3},\label{heateq}\\
 &u({\bf x},t)=g({\bf x},t),\quad ({\bf x},t)\in\partial\Omega,\label{dtbc}\\
 &u({\bf x},0)=u_0({\bf x}),\quad {\bf x}\in\bar{\Omega},\label{inicd}
 \end{align}
 where $\Omega$ is open, $\partial\Omega$ denotes boundary of $\Omega$, $f$, $g$ and $u_0$ are all given functions, $a({\bf x})$ is a given positive function.
 
 For a positive integer $N$, denote $\tau=\frac{T}{N}$ and $t_n=n\tau$ for $n=0,1,...,N$. The backward difference scheme is employed to discretize $\partial_t$, i.e., we adopt the discretization:
 \begin{align}\label{backwarddiff}
 \partial_t u({\bf x},t_n)\approx \frac{u({\bf x},t_n)-u({\bf x},t_{n-1})}{\tau},\quad n=1,2,...,N
 \end{align}
 
 Let $J$ be a positive integer. Denote the mass matrix by ${\bf M}\in\mathbb{R}^{J\times J}$ and denote the discretization of $-\nabla(a({\bf x})\nabla \cdot)$ by ${\bf K}\in\mathbb{R}^{J\times J}$.
 
 Then, \eqref{heateq}--\eqref{inicd} is discretized as follows
 \begin{equation}\label{stepsystem}
 {\bf M}\left(\frac{{\bf u}^{n}-{\bf u}^{n-1}}{\tau}\right)+{\bf K}{\bf u}^{n}={\bf f}^{n},\quad n=1,2,...,N,
 \end{equation}
 where ${\bf f}^{n}$ ($n=1,2,...,N$) consists of discretization of $f$ and $g$, ${\bf u}^{0}$ is discretization of $u_0$ on the spatial mesh, the unknowns ${\bf u}^{n}$ $(n=1,2,...,N)$ are approximation of $u(\cdot,t_n)$ on the spatial mesh.
 
 For column vectors ${\bf v}_i$ ($i=1,2,...,m$), we use the notation $({\bf v}_1;{\bf v}_2;\cdots;{\bf v}_m)$ to denote the following column vector:
 \begin{equation*}
 \left[\begin{array}[c]{c}
 {\bf v}_1\\
 {\bf v}_2\\
 \vdots\\
 {\bf v}_m
 \end{array}\right].
 \end{equation*} 
 Putting the $N$ many linear systems into a large linear system, we obtain
 \begin{equation}\label{allatoncesystem}
 {\bf L}{\bf u}={\bf f},
 \end{equation}
 where
 \begin{align*}
 &{\bf u}=({\bf u}^{1};{\bf u}^{2};\cdots;{\bf u}^{N}),\quad {\bf f}=(\tau{\bf f}^{1}+{\bf M}{\bf u}^{0};\tau{\bf f}^{2};\tau{\bf f}^{3};\cdots;\tau{\bf f}^{N}),\\
 &{\bf L}=\left[
 \begin{array}
 [c]{cccc}
 {\bf A}_0& &   &  \\
 -{\bf M}& {\bf A}_0&  &\\
 ~&\ddots&\ddots&\\
 ~& ~& -{\bf M} & {\bf A}_0
 \end{array}
 \right]\in\mathbb{R}^{NJ\times NJ},\quad {\bf A}_0={\bf M}+\tau{\bf K}.
 \end{align*}
 
 \begin{rem}
 	The BEC preconditioner is still available when \eqref{backwarddiff} is replaced by multi-step backward difference schemes, the details of which are discussed in Section \ref{implementation}. For the purpose of analysis and fast implementation, some assumptions of ${\bf M}$ and ${\bf K}$ are listed as follows
 	\begin{assump}\label{spdassump}
 		Both ${\bf M}$ and ${\bf K}$ are real symmetric positive definite.
 	\end{assump}
 	\begin{assump}\label{regularmassassump}
 		The condition number, $\kappa_2({\bf M})$, of ${\bf M}$ is uniformly bounded, i.e., $\sup\limits_{J\in\mathbb{N}^{+}}\kappa_2({\bf M})<+\infty$.
 	\end{assump}
 	\begin{assump}\label{sparseassumpp}
 		Both ${\bf M}$ and ${\bf K}$ are sparse, i.e., ${\bf M}$ and ${\bf K}$ only have $\mathcal{O}(J)$ many nonzero entries.
 	\end{assump}
 
 	The Assumption \ref{spdassump} is fulfilled by a lot of discretization schemes, such as, central difference method, finite element methods. The Assumption \ref{regularmassassump} is quite obvious when the spatial discretization is of finite difference type, since in that case ${\bf M}$ is exactly an identity matrix. Moreover, Assumption \ref{regularmassassump} is also fulfilled for finite element discretization whenever the mesh is simplicial and quasi-uniform; see \cite{kamenski2014conditioning}. The Assumption \ref{sparseassumpp} is obvious for finite difference method or finite element methods with locally supported basis.
 \end{rem}
 
 \section{The BEC preconditioner and Analysis of the Preconditioned System by BEC Preconditioner}\label{preconditioning}
 In this section, we propose the BEC preconditioner and investigate some interesting properties such as identity-plus-low-rank decomposition, spectral clustering and diagonalizability of the preconditioned matrix. These properties may not be directly related to fast convergence of iterative solver for the preconditioned system (see, e.g., \cite{greenbaum1996any,arioli1998krylov}). In the end of this section, we will also prove that the GMRES method for the preconditioned system has a linear convergence rate independent of $N$ and $J$ when $\epsilon\lesssim\sqrt{\tau}$.
 
 The BEC preconditioner for the all-at-once system \eqref{allatoncesystem} is defined as
 \begin{align*}
 {\bf P}_{\epsilon}=\left[
 \begin{array}
 [c]{cccc}
 {\bf A}_0& &   & -\epsilon{\bf M} \\
 -{\bf M}& {\bf A}_0&  &\\
 ~&\ddots&\ddots&\\
 ~& ~& -{\bf M} & {\bf A}_0
 \end{array}
 \right]\in\mathbb{R}^{NJ\times NJ},
 \end{align*}
 where $\epsilon>0$ is a parameter.
 When $\epsilon=1$, then ${\bf P}_{\epsilon}$ is exactly the block circulant preconditioner proposed in \cite{mcdonald2018preconditioning}.
 
 It is clear that ${\bf L}$ is invertible. Moreover, since ${\bf L}$ is a block lower triangular Toeplitz matrix, ${\bf L}^{-1}$ is also a block lower triangular Toeplitz matrix, which can be rewritten as follows \cite{mcdonald2018preconditioning}
 \begin{equation}\label{elinvlttexpres}
 {\bf L}^{-1}=\left[
 \begin{array}
 [c]{cccc}
 ({\bf L}^{-1})_{0} &       &  &\\
 ({\bf L}^{-1})_{1}& ({\bf L}^{-1})_{0}&  &\\
 \vdots&\ddots &\ddots&\\
 ({\bf L}^{-1})_{N-1}& \ldots & ({\bf L}^{-1})_{1} & ({\bf L}^{-1})_{0}
 \end{array}
 \right],\qquad\begin{array}[c]{c}
 ({\bf L}^{-1})_{k}:=({\bf A}_0^{-1}{\bf M})^{k}{\bf A}_0^{-1}\\
 k=0,1,...,N-1.
 \end{array}
 \end{equation}
 Denote by ${\bf I}_k$, the $k\times k$ identity matrix. Let ${\bf e}_i$ be $i$th column of ${\bf I}_{N}$. Denote ${\bf E}_i={\bf e}_i\otimes{\bf I}_{J}$.
 
 For any Hermitian positive semi-definite matrix ${\bf H}\in\mathbb{C}^{m\times m}$, denote $${\bf H}^{\frac{1}{2}}:={\bf U}^{*}{\rm diag}(d_1^{\frac{1}{2}},d_2^{\frac{1}{2}},...,d_m^{\frac{1}{2}}){\bf U},$$
 where ${\bf U}^{*}{\rm diag}(d_1,d_2,...,d_m){\bf U}$ is unitary diagonalization of ${\bf H}$. In particular, if ${\bf H}$ is Hermitian positive definite, then we rewrite $({\bf H}^{-1})^{\frac{1}{2}}$ as ${\bf H}^{-\frac{1}{2}}$ for notation simplification.
 
 For any square matrix ${\bf C}$, denote by $\sigma({\bf C})$ the spectrum of ${\bf C}$.
 Denote
 \begin{equation}\label{zepsdef}
 {\bf Z}_{\epsilon}:=\epsilon^{-1}[{\bf I}_{J}-\epsilon({\bf A}_0^{-1}{\bf M})^{N}]{\bf M}^{-1}.
 \end{equation}
 \begin{theorem}\label{pinvertibility}
 	Let $\epsilon\in(0,1]$. Then, both ${\bf P}_{\epsilon}$ and ${\bf Z}_{\epsilon}$ are invertible with ${\bf P}^{-1}_{\epsilon}={\bf L}^{-1}+{\bf L}^{-1}{\bf E}_1{\bf Z}_{\epsilon}^{-1}{\bf E}_{N}^{\rm T}{\bf L}^{-1}$.
 \end{theorem}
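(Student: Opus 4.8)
The plan is to recognize ${\bf P}_{\epsilon}$ as a low-rank modification of ${\bf L}$ and then invoke the Sherman--Morrison--Woodbury (SMW) identity. Observe first that ${\bf P}_{\epsilon}$ differs from ${\bf L}$ only in its top-right $J\times J$ block, so with ${\bf E}_i={\bf e}_i\otimes{\bf I}_J$ we may write ${\bf P}_{\epsilon}={\bf L}-\epsilon{\bf E}_1{\bf M}{\bf E}_N^{\rm T}$, a perturbation of rank at most $J$. Since ${\bf A}_0={\bf M}+\tau{\bf K}$ is symmetric positive definite (hence invertible) under Assumption \ref{spdassump} and ${\bf L}$ is block lower triangular with diagonal blocks ${\bf A}_0$, the invertibility of ${\bf L}$ and the explicit form of ${\bf L}^{-1}$ in \eqref{elinvlttexpres} are available. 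The SMW identity with ${\bf A}={\bf L}$, ${\bf U}={\bf E}_1$, ${\bf C}=-\epsilon{\bf M}$ and ${\bf V}={\bf E}_N^{\rm T}$ then reduces the whole question to inverting the small capacitance matrix ${\bf C}^{-1}+{\bf E}_N^{\rm T}{\bf L}^{-1}{\bf E}_1$, which I will identify with $-{\bf Z}_{\epsilon}$.

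The first genuine step is the invertibility of ${\bf Z}_{\epsilon}$, for which it suffices to show that ${\bf I}_J-\epsilon({\bf A}_0^{-1}{\bf M})^{N}$ is nonsingular. I would bound the spectrum of ${\bf A}_0^{-1}{\bf M}$: every eigenvalue $\lambda$ arises from the generalized eigenproblem ${\bf M}{\bf v}=\lambda({\bf M}+\tau{\bf K}){\bf v}$, whence $(1-\lambda){\bf v}^{*}{\bf M}{\bf v}=\lambda\tau\,{\bf v}^{*}{\bf K}{\bf v}$. Because ${\bf M}$ and ${\bf K}$ are symmetric positive definite, both quadratic forms are strictly positive, which forces $(1-\lambda)/\lambda>0$ and hence $\lambda\in(0,1)$. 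Consequently $\rho({\bf A}_0^{-1}{\bf M})<1$, so for $\epsilon\in(0,1]$ every element of $\sigma(\epsilon({\bf A}_0^{-1}{\bf M})^{N})$ lies strictly inside the unit disc; in particular $1\notin\sigma(\epsilon({\bf A}_0^{-1}{\bf M})^{N})$, so ${\bf I}_J-\epsilon({\bf A}_0^{-1}{\bf M})^{N}$, and therefore ${\bf Z}_{\epsilon}$, is invertible.

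With ${\bf Z}_{\epsilon}$ invertible I would close the argument by evaluating the capacitance matrix. From \eqref{elinvlttexpres}, extracting the $(N,1)$ block gives ${\bf E}_N^{\rm T}{\bf L}^{-1}{\bf E}_1=({\bf L}^{-1})_{N-1}=({\bf A}_0^{-1}{\bf M})^{N-1}{\bf A}_0^{-1}$, and using the identity $({\bf A}_0^{-1}{\bf M})^{N}{\bf M}^{-1}=({\bf A}_0^{-1}{\bf M})^{N-1}{\bf A}_0^{-1}$ one checks that ${\bf C}^{-1}+{\bf E}_N^{\rm T}{\bf L}^{-1}{\bf E}_1=-\epsilon^{-1}{\bf M}^{-1}+({\bf A}_0^{-1}{\bf M})^{N-1}{\bf A}_0^{-1}=-{\bf Z}_{\epsilon}$. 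Since $-{\bf Z}_{\epsilon}$ is invertible, the SMW identity shows that ${\bf P}_{\epsilon}$ is invertible and yields ${\bf P}_{\epsilon}^{-1}={\bf L}^{-1}-{\bf L}^{-1}{\bf E}_1(-{\bf Z}_{\epsilon})^{-1}{\bf E}_N^{\rm T}{\bf L}^{-1}={\bf L}^{-1}+{\bf L}^{-1}{\bf E}_1{\bf Z}_{\epsilon}^{-1}{\bf E}_N^{\rm T}{\bf L}^{-1}$, as claimed. Alternatively, once the formula is guessed, one can bypass SMW entirely and simply multiply the candidate inverse by ${\bf P}_{\epsilon}$ on both sides, again using the $(N,1)$-block extraction to collapse the cross terms.

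The main obstacle is really the spectral bound $\rho({\bf A}_0^{-1}{\bf M})<1$: this is where Assumption \ref{spdassump} is indispensable, and it is precisely what keeps the capacitance matrix $-{\bf Z}_{\epsilon}$ nonsingular over the full range $\epsilon\in(0,1]$ rather than only for small $\epsilon$. The remaining work---spotting the rank-$J$ structure of ${\bf P}_{\epsilon}$, reading off the $(N,1)$ block of ${\bf L}^{-1}$ from \eqref{elinvlttexpres}, and matching signs in the SMW formula---is routine bookkeeping that I would keep compact.
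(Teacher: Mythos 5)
Your proposal is correct and follows essentially the same route as the paper: the same rank-$J$ splitting ${\bf P}_{\epsilon}={\bf L}-\epsilon{\bf E}_1{\bf M}{\bf E}_{N}^{\rm T}$ and the same spectral bound $\sigma({\bf A}_0^{-1}{\bf M})\subset(0,1)$ (you obtain it from the generalized eigenproblem ${\bf M}{\bf v}=\lambda({\bf M}+\tau{\bf K}){\bf v}$, the paper from similarity to ${\bf M}^{-\frac{1}{2}}{\bf A}_0{\bf M}^{-\frac{1}{2}}$). The only difference is presentational: the paper verifies the stated inverse by direct multiplication, whereas you derive it from the Sherman--Morrison--Woodbury identity by identifying $-{\bf Z}_{\epsilon}$ with the capacitance matrix ${\bf C}^{-1}+{\bf E}_{N}^{\rm T}{\bf L}^{-1}{\bf E}_1$, a computation that checks out and also explains where ${\bf Z}_{\epsilon}$ comes from.
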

 \begin{proof}
 	By matrix similarity, we have 
 	\begin{align*}
 	\sigma({\bf M}^{-1}{\bf A}_0)=\sigma({\bf M}^{-\frac{1}{2}}{\bf A}_0{\bf M}^{-\frac{1}{2}})=\sigma({\bf M}^{-\frac{1}{2}}({\bf M}+\tau{\bf K}){\bf M}^{-\frac{1}{2}})=\sigma({\bf I}_{J}+\tau{\bf M}^{-\frac{1}{2}}{\bf K}{\bf M}^{-\frac{1}{2}}),
 	\end{align*}
 	which implies that $\sigma({\bf M}^{-1}{\bf A}_0)\in(1,+\infty)$. Thus, $\sigma({\bf A}_0^{-1}{\bf M})=\sigma(({\bf M}^{-1}{\bf A}_0)^{-1})\in(0,1)$. By $\epsilon\in(0,1]$, we know that $\sigma(\epsilon({\bf A}_0^{-1}{\bf M})^{N})\in(0,1)$. That means $0\notin\sigma({\bf I}_{J}-\epsilon({\bf A}_0^{-1}{\bf M})^{N})$, which proves that ${\bf Z}_{\epsilon}$ is invertible.
 	
 	It is clear that ${\bf P}_{\epsilon}$ can be rewritten as ${\bf P}_{\epsilon}={\bf L}-\epsilon{\bf E}_1{\bf M}{\bf E}_{N}^{\rm T}$. Using this expression of ${\bf P}_{\epsilon}$, it is straightforward to verify that ${\bf P}_{\epsilon}({\bf L}^{-1}+{\bf L}^{-1}{\bf E}_1{\bf Z}_{\epsilon}^{-1}{\bf E}_{N}^{\rm T}{\bf L}^{-1})={\bf I}_{NJ}$, which shows that ${\bf P}_{\epsilon}$ is invertible and ${\bf P}_{\epsilon}^{-1}={\bf L}^{-1}+{\bf L}^{-1}{\bf E}_1{\bf Z}_{\epsilon}^{-1}{\bf E}_{N}^{\rm T}{\bf L}^{-1}$.
 \end{proof}
 
 \begin{rem}
 	As shown in Theorem \ref{pinvertibility}, $\epsilon\in(0,1]$ guarantees the invertibility of ${\bf P}_{\epsilon}$. Hence, throughout this paper, we choose $\epsilon\in(0,1]$.
 \end{rem}
 
 With BEC preconditioner, instead of solving \eqref{allatoncesystem}, we employ Krylov subspace methods to solve the preconditioned system as follows
 \begin{equation}\label{preconditionedsystem}
 {\bf P}_{\epsilon}^{-1}{\bf L}{\bf u}={\bf P}_{\epsilon}^{-1}{\bf f}.
 \end{equation}
 
 \begin{theorem}\label{preconditionedspectrum}
 	\begin{description}
 		\item[(i)]The preconditioned matrix ${\bf P}_{\epsilon}^{-1}{\bf L}$ has a identity-plus-low-rank decomposition, i.e., ${\rm rank}({\bf P}_{\epsilon}^{-1}{\bf L}-{\bf I}_{NJ})= J$. Hence, ${\bf P}_{\epsilon}^{-1}{\bf L}$ has exactly $(N-1)J$ many eigenvalues equal to 1.
 		\item[(ii)] Given any constant $\eta\in(0,1)$, take $\epsilon\in(0,\eta]$. Then, $\max\limits_{\lambda\in\sigma({\bf P}_{\epsilon}^{-1}{\bf L})}|\lambda-1|\leq\frac{\epsilon}{1-\eta}$.
 	\end{description}
 \end{theorem}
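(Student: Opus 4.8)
The plan is to derive both parts from the explicit inverse ${\bf P}_{\epsilon}^{-1}={\bf L}^{-1}+{\bf L}^{-1}{\bf E}_1{\bf Z}_{\epsilon}^{-1}{\bf E}_{N}^{\rm T}{\bf L}^{-1}$ furnished by Theorem \ref{pinvertibility}. Right-multiplying by ${\bf L}$ and using ${\bf L}^{-1}{\bf L}={\bf I}_{NJ}$ gives
\begin{equation*}
{\bf P}_{\epsilon}^{-1}{\bf L}-{\bf I}_{NJ}={\bf L}^{-1}{\bf E}_1{\bf Z}_{\epsilon}^{-1}{\bf E}_{N}^{\rm T}.
\end{equation*}
For part (i) I would read the rank directly off this factorization: ${\bf Z}_{\epsilon}^{-1}\in\mathbb{R}^{J\times J}$ is invertible by Theorem \ref{pinvertibility}, ${\bf L}^{-1}{\bf E}_1\in\mathbb{R}^{NJ\times J}$ has full column rank $J$ (since ${\bf L}^{-1}$ is invertible and ${\bf E}_1$ has full column rank), and ${\bf E}_{N}^{\rm T}\in\mathbb{R}^{J\times NJ}$ has full row rank $J$. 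Hence the product has rank exactly $J$, so ${\rm rank}({\bf P}_{\epsilon}^{-1}{\bf L}-{\bf I}_{NJ})=J$, and the kernel of ${\bf P}_{\epsilon}^{-1}{\bf L}-{\bf I}_{NJ}$ has dimension $(N-1)J$, supplying $(N-1)J$ eigenvalues equal to $1$. That the multiplicity is exactly $(N-1)J$ will be confirmed by part (ii), where the remaining $J$ eigenvalues turn out to be strictly greater than $1$.

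For part (ii) the idea is to reduce the non-unit eigenvalues to a $J\times J$ problem and bound them explicitly. Writing the rank-$J$ perturbation as ${\bf A}{\bf B}$ with ${\bf A}={\bf L}^{-1}{\bf E}_1{\bf Z}_{\epsilon}^{-1}$ and ${\bf B}={\bf E}_{N}^{\rm T}$, the nonzero eigenvalues of ${\bf A}{\bf B}$ coincide with those of ${\bf B}{\bf A}={\bf E}_{N}^{\rm T}{\bf L}^{-1}{\bf E}_1{\bf Z}_{\epsilon}^{-1}$. By the block lower triangular Toeplitz form \eqref{elinvlttexpres}, ${\bf E}_{N}^{\rm T}{\bf L}^{-1}{\bf E}_1$ extracts the $(N,1)$ block $({\bf L}^{-1})_{N-1}=({\bf A}_0^{-1}{\bf M})^{N-1}{\bf A}_0^{-1}$. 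Substituting ${\bf Z}_{\epsilon}^{-1}=\epsilon{\bf M}[{\bf I}_{J}-\epsilon({\bf A}_0^{-1}{\bf M})^{N}]^{-1}$ and collapsing $({\bf A}_0^{-1}{\bf M})^{N-1}{\bf A}_0^{-1}{\bf M}=({\bf A}_0^{-1}{\bf M})^{N}$ yields the clean form
\begin{equation*}
{\bf B}{\bf A}=\epsilon\,{\bf W}[{\bf I}_{J}-\epsilon{\bf W}]^{-1},\qquad {\bf W}:=({\bf A}_0^{-1}{\bf M})^{N}.
\end{equation*}

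It then remains to bound $\sigma({\bf B}{\bf A})$. As established in the proof of Theorem \ref{pinvertibility}, ${\bf A}_0^{-1}{\bf M}$ is similar to a symmetric positive definite matrix with $\sigma({\bf A}_0^{-1}{\bf M})\subset(0,1)$; hence ${\bf W}$ is diagonalizable with eigenvalues $w_1,\dots,w_J\in(0,1)$, and, since ${\bf W}$ commutes with $[{\bf I}_{J}-\epsilon{\bf W}]^{-1}$, the eigenvalues of ${\bf B}{\bf A}$ are $\mu_i=\frac{\epsilon w_i}{1-\epsilon w_i}$. For $\epsilon\in(0,\eta]$ with $\eta\in(0,1)$ one has $\epsilon w_i<\epsilon\le\eta$, so $1-\epsilon w_i>1-\eta>0$ and $0<\mu_i<\frac{\epsilon}{1-\eta}$. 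Consequently $\sigma({\bf P}_{\epsilon}^{-1}{\bf L})$ consists of $1$ together with $1+\mu_1,\dots,1+\mu_J$, all within distance $\frac{\epsilon}{1-\eta}$ of $1$, which is the claim; moreover $\mu_i>0$ shows these $J$ eigenvalues differ from $1$, pinning down the ``exactly $(N-1)J$'' count in part (i). The only delicate step is the passage to ${\bf B}{\bf A}$ together with the algebraic simplification to $\epsilon{\bf W}[{\bf I}_{J}-\epsilon{\bf W}]^{-1}$; once that form is in hand, the estimate is immediate from the monotonicity of $t\mapsto\frac{\epsilon t}{1-\epsilon t}$ and the location of $\sigma({\bf W})$ in $(0,1)$.
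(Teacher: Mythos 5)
Your argument is correct, and part (i) is exactly the paper's: both read the rank $J$ off the factorization ${\bf P}_{\epsilon}^{-1}{\bf L}-{\bf I}_{NJ}={\bf L}^{-1}{\bf E}_1{\bf Z}_{\epsilon}^{-1}{\bf E}_{N}^{\rm T}$ coming from Theorem \ref{pinvertibility}. For part (ii) you take a mildly different route to the same place: the paper substitutes the explicit block lower triangular Toeplitz form \eqref{elinvlttexpres} of ${\bf L}^{-1}$ to exhibit ${\bf P}_{\epsilon}^{-1}{\bf L}$ as the block upper triangular matrix \eqref{preconditionedmatblockexpan}, whence $\sigma({\bf P}_{\epsilon}^{-1}{\bf L})=\{1\}\cup\sigma({\bf I}_J+({\bf L}^{-1})_{N-1}{\bf Z}_{\epsilon}^{-1})$, while you invoke the identity $\sigma({\bf A}{\bf B})\setminus\{0\}=\sigma({\bf B}{\bf A})\setminus\{0\}$ to reduce directly to the $J\times J$ matrix ${\bf E}_N^{\rm T}{\bf L}^{-1}{\bf E}_1{\bf Z}_{\epsilon}^{-1}=({\bf L}^{-1})_{N-1}{\bf Z}_{\epsilon}^{-1}$. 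The two reductions land on the same block, and your closed form $\epsilon{\bf W}({\bf I}_J-\epsilon{\bf W})^{-1}$ with ${\bf W}=({\bf A}_0^{-1}{\bf M})^N$ is just the paper's $\epsilon[({\bf M}^{-1}{\bf A}_0)^N-\epsilon{\bf I}_J]^{-1}$ written in terms of $w=\lambda^{-N}$; the resulting bound $\frac{\epsilon w}{1-\epsilon w}<\frac{\epsilon}{1-\eta}$ for $w\in(0,1)$, $\epsilon\le\eta$, matches the paper's $\sup_{\lambda>1}|\epsilon/(\lambda^N-\epsilon)|\le\epsilon/(1-\eta)$. Your version is slightly slicker and, by observing $\mu_i>0$, actually pins down that the algebraic multiplicity of the eigenvalue $1$ is exactly $(N-1)J$, a point the paper leaves implicit in (i); the paper's version buys the explicit block expansion \eqref{preconditionedmatblockexpan}, which it reuses later in Lemma \ref{preconditionedmatapproximateidlem} for the norm estimate. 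No gaps.
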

 \begin{proof}
 	By Theorem \ref{pinvertibility}, ${\bf P}_{\epsilon}^{-1}{\bf L}-{\bf I}_{NJ}={\bf L}^{-1}{\bf E}_1{\bf Z}_{\epsilon}^{-1}{\bf E}_{N}^{\rm T}$, with ${\bf Z}_{\epsilon}$ defined in \eqref{zepsdef}. Then, $${\rm rank}({\bf L}^{-1}{\bf E}_1{\bf Z}_{\epsilon}^{-1}{\bf E}_{N}^{\rm T})={\rm rank}({\bf E}_1{\bf Z}_{\epsilon}^{-1}{\bf E}_{N}^{\rm T})= J,$$
 	which proves ${\bf (i)}$.
 	
 	Substituting \eqref{elinvlttexpres} into ${\bf P}_{\epsilon}^{-1}{\bf L}={\bf I}_{NJ}+{\bf L}^{-1}{\bf E}_1{\bf Z}_{\epsilon}^{-1}{\bf E}_{N}^{\rm T}$, we obtain
 	\begin{equation}\label{preconditionedmatblockexpan}
 	{\bf P}_{\epsilon}^{-1}{\bf L}=\left[\begin{array}[c]{cccc}
 	{\bf I}_{J}&&&({\bf L}^{-1})_{0}{\bf Z}_{\epsilon}^{-1}\\
 	&{\bf I}_{J}&&({\bf L}^{-1})_{1}{\bf Z}_{\epsilon}^{-1}\\
 	&&\ddots&\vdots\\
 	&&&{\bf I}_{J}+({\bf L}^{-1})_{N-1}{\bf Z}_{\epsilon}^{-1}
 	\end{array}\right]
 	\end{equation}
 	Therefore, $\sigma({\bf P}_{\epsilon}^{-1}{\bf L})=\{1\}\cup\sigma({\bf I}_{J}+({\bf L}^{-1})_{N-1}{\bf Z}_{\epsilon}^{-1})$. And then,
 	\begin{equation*}
 	\max\limits_{\lambda\in\sigma({\bf P}_{\epsilon}^{-1}{\bf L})}|\lambda-1|=\max\limits_{\lambda\in\sigma({\bf I}_{J}+({\bf L}^{-1})_{N-1}{\bf Z}_{\epsilon}^{-1})}|\lambda-1|.
 	\end{equation*}
 	It thus remains to investigate $\sigma({\bf I}_{J}+({\bf L}^{-1})_{N-1}{\bf Z}_{\epsilon}^{-1})$.
 	By \eqref{elinvlttexpres} and definition of ${\bf Z}_{\epsilon}$ given in Theorem \ref{pinvertibility}, 
 	\begin{equation}\label{lastdiagblockexpans}
 	{\bf I}_{J}+({\bf L}^{-1})_{N-1}{\bf Z}_{\epsilon}^{-1}={\bf I}_{J}+\epsilon[({\bf M}^{-1}{\bf A}_0)^{N}-\epsilon{\bf I}_{J}]^{-1},
 	\end{equation}
 	which implies that
 	\begin{align*}
 	\sigma({\bf I}_{J}+({\bf L}^{-1})_{N-1}{\bf Z}_{\epsilon}^{-1})&=\left\{1+\epsilon(\lambda^{N}-\epsilon)^{-1}|\lambda\in\sigma({\bf M}^{-1}{\bf A}_0)\right\}\\
 	&=\left\{\frac{\lambda^N}{\lambda^N-\epsilon}\bigg|\lambda\in\sigma({\bf M}^{-\frac{1}{2}}{\bf A}_0{\bf M}^{-\frac{1}{2}})\right\}\\
 	&=\left\{\frac{\lambda^N}{\lambda^N-\epsilon}\bigg|\lambda\in\sigma({\bf I}_{J}+\tau{\bf M}^{-\frac{1}{2}}{\bf K}{\bf M}^{-\frac{1}{2}})\right\}\subset\left\{\frac{\lambda^N}{\lambda^N-\epsilon}\bigg|\lambda\in(1,+\infty)\right\}.
 	\end{align*}
 	Hence,
 	\begin{align*}
 	\max\limits_{\lambda\in\sigma({\bf I}_{J}+({\bf L}^{-1})_{N-1}{\bf Z}_{\epsilon}^{-1})}|\lambda-1|&\leq\sup\limits_{\lambda\in(1,+\infty)}\left|\frac{\lambda^{N}}{\lambda^N-\epsilon}-1\right|\\
 	&=\sup\limits_{\lambda\in(1,+\infty)}\left|\frac{\epsilon}{\lambda^N-\epsilon}\right|\leq \frac{\epsilon}{1-\eta},
 	\end{align*}
 	which completes the proof.
 \end{proof}
 
 Theorem \ref{preconditionedspectrum}${\bf (i)}$ implies that by using GMRES method, the exact solution of the preconditioned system \eqref{preconditionedsystem} can be found within at most $J+1$ iterations. But this is not a sharp estimation of convergence rate of GMRES method when $J$ is not small. In Theorem \ref{gmrescvgthm}, we will show that GMRES method for the system \eqref{preconditionedsystem} has a linear convergence rate independent of $N$ and $J$ when $\epsilon\lesssim\sqrt{\tau}$. Theorem \ref{preconditionedspectrum}${\bf (ii)}$ shows that all the eigenvalues of the preconditioned matrix are clustered at 1 with clustering radius of $\mathcal{O}(\epsilon)$.
 
 \begin{lemma}\label{diagonalizableblock}
 	There exists an invertible matrix ${\bf V}\in\mathbb{R}^{J\times J}$ and a diagonal matrix ${\bf D}\in\mathbb{R}^{J\times J}$ such that
 	$ {\bf I}_{J}+({\bf L}^{-1})_{N-1}{\bf Z}_{\epsilon}^{-1}={\bf V}{\bf D}{\bf V}^{-1}$ and $1\notin\sigma({\bf D})$, where ${\bf Z}_{\epsilon}$ is defined in \eqref{zepsdef}.
 \end{lemma}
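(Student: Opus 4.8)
The plan is to exploit the fact, already extracted in the proof of Theorem~\ref{preconditionedspectrum}, that the block of interest is a scalar rational function of the single matrix ${\bf M}^{-1}{\bf A}_0$. Indeed, \eqref{lastdiagblockexpans} gives
\begin{equation*}
{\bf I}_{J}+({\bf L}^{-1})_{N-1}{\bf Z}_{\epsilon}^{-1}={\bf I}_{J}+\epsilon[({\bf M}^{-1}{\bf A}_0)^{N}-\epsilon{\bf I}_{J}]^{-1}=\phi({\bf M}^{-1}{\bf A}_0),\qquad \phi(\lambda)=\frac{\lambda^{N}}{\lambda^{N}-\epsilon}.
\end{equation*}
Hence it suffices to produce a \emph{real} diagonalization of ${\bf M}^{-1}{\bf A}_0$ and carry it through $\phi$, since $\phi$ applied to a real diagonalizable matrix preserves the diagonalizing factor and acts on the eigenvalues entrywise.

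First I would symmetrize. By Assumption~\ref{spdassump}, ${\bf M}$ and ${\bf A}_0={\bf M}+\tau{\bf K}$ are real symmetric positive definite, so ${\bf M}^{-\frac{1}{2}}{\bf A}_0{\bf M}^{-\frac{1}{2}}$ is real symmetric positive definite as well. The spectral theorem then yields a real orthogonal ${\bf Q}$ and a real diagonal $\Lambda={\rm diag}(\lambda_1,\dots,\lambda_J)$ with ${\bf M}^{-\frac{1}{2}}{\bf A}_0{\bf M}^{-\frac{1}{2}}={\bf Q}\Lambda{\bf Q}^{\rm T}$; moreover, exactly as computed in the proof of Theorem~\ref{pinvertibility}, each $\lambda_i\in(1,+\infty)$. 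Setting ${\bf V}:={\bf M}^{-\frac{1}{2}}{\bf Q}$, so that ${\bf V}^{-1}={\bf Q}^{\rm T}{\bf M}^{\frac{1}{2}}$ and ${\bf V}\in\IR^{J\times J}$ is real and invertible, a short computation gives ${\bf M}^{-1}{\bf A}_0={\bf V}\Lambda{\bf V}^{-1}$.

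Finally I would push this similarity through $\phi$. Since $\lambda_i>1\geq\epsilon$ for $\epsilon\in(0,1]$, we have $\lambda_i^{N}-\epsilon>0$, so $\Lambda^{N}-\epsilon{\bf I}_{J}$ is invertible and
\begin{equation*}
{\bf I}_{J}+({\bf L}^{-1})_{N-1}{\bf Z}_{\epsilon}^{-1}={\bf V}\Big({\bf I}_{J}+\epsilon[\Lambda^{N}-\epsilon{\bf I}_{J}]^{-1}\Big){\bf V}^{-1}={\bf V}{\bf D}{\bf V}^{-1},\qquad {\bf D}={\rm diag}\Big(\tfrac{\lambda_1^{N}}{\lambda_1^{N}-\epsilon},\dots,\tfrac{\lambda_J^{N}}{\lambda_J^{N}-\epsilon}\Big),
\end{equation*}
a real diagonal matrix, which delivers the claimed factorization. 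To see $1\notin\sigma({\bf D})$, note that a diagonal entry $\tfrac{\lambda_i^{N}}{\lambda_i^{N}-\epsilon}$ equals $1$ only if $\lambda_i^{N}=\lambda_i^{N}-\epsilon$, i.e.\ $\epsilon=0$, which is excluded since $\epsilon>0$. I do not anticipate a genuine obstacle here; the only points requiring care are that the diagonalizing factor ${\bf V}$ and the eigenvalues $\lambda_i$ come out real — guaranteed by the symmetric-positive-definite congruence rather than a generic diagonalization of the non-symmetric ${\bf M}^{-1}{\bf A}_0$ — and that each denominator $\lambda_i^{N}-\epsilon$ is strictly positive, so that ${\bf D}$ is well defined and the value $1$ is strictly avoided.
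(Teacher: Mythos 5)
Your proposal is correct and follows essentially the same route as the paper: both symmetrize via the congruence with ${\bf M}^{\pm\frac{1}{2}}$, invoke the spectral theorem for the resulting real symmetric matrix, and take ${\bf V}={\bf M}^{-\frac{1}{2}}{\bf Q}$. The only cosmetic difference is that the paper orthogonally diagonalizes the already-assembled matrix ${\bf H}_0={\bf I}_J+\epsilon[({\bf M}^{-\frac{1}{2}}{\bf A}_0{\bf M}^{-\frac{1}{2}})^N-\epsilon{\bf I}_J]^{-1}$, whereas you diagonalize ${\bf M}^{-\frac{1}{2}}{\bf A}_0{\bf M}^{-\frac{1}{2}}$ first and push the rational function $\phi$ through the similarity, arriving at the same ${\bf V}$ and ${\bf D}$.
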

 \begin{proof}
 	Denote ${\bf H}_0:={\bf I}_{J}+\epsilon[({\bf M}^{-\frac{1}{2}}{\bf A}_0{\bf M}^{-\frac{1}{2}})^N-\epsilon{\bf I}_J]^{-1}$.
 	From \eqref{lastdiagblockexpans}, we know that
 	\begin{align*}
 	{\bf I}_{J}+({\bf L}^{-1})_{N-1}{\bf Z}_{\epsilon}^{-1}={\bf I}_{J}+\epsilon[({\bf M}^{-1}{\bf A}_0)^{N}-\epsilon{\bf I}_{J}]^{-1}={\bf M}^{-\frac{1}{2}}{\bf H}_0{\bf M}^{\frac{1}{2}}.
 	\end{align*}
 	Since ${\bf M}^{-\frac{1}{2}}{\bf A}_0{\bf M}^{-\frac{1}{2}}$ is real symmetric, so is ${\bf H}_0$. Thus, ${\bf H}_0$ is orthogonally diagonalizable, i.e, there exists an orthogonal matrix ${\bf Q}\in\mathbb{R}^{J\times J}$ and a diagonal matrix ${\bf D}\in\mathbb{R}^{J\times J}$ such that ${\bf H}_0={\bf Q}{\bf D}{\bf Q}^{\rm T}$. Letting ${\bf V}={\bf M}^{-\frac{1}{2}}{\bf Q}$, we then obtain ${\bf I}_{J}+({\bf L}^{-1})_{N-1}{\bf Z}_{\epsilon}^{-1}={\bf V}{\bf D}{\bf V}^{-1}={\bf V}{\bf D}{\bf V}^{-1}$.
 	
 	By ${\bf H}_0={\bf Q}{\bf D}{\bf Q}^{\rm T}$, definition of ${\bf H}_0$ and $\epsilon\in (0,1]$, we know that
 	\begin{align*}
 	\sigma({\bf D})=\sigma({\bf H}_0)=\left\{\frac{\lambda^N}{\lambda^N-\epsilon}\bigg|\lambda\in\sigma({\bf M}^{-\frac{1}{2}}{\bf A}_0{\bf M}^{-\frac{1}{2}})\right\}&=\left\{\frac{\lambda^N}{\lambda^N-\epsilon}\bigg|\lambda\in\sigma({\bf I}_J+\tau{\bf M}^{-\frac{1}{2}}{\bf K}{\bf M}^{-\frac{1}{2}})\right\}\\
 	&\subset(1,+\infty),
 	\end{align*}
 	which means $1\notin\sigma({\bf D})$.
 \end{proof}
 \begin{theorem}\label{preconditionedmatdiagonalization}
 	The preconditioned matrix ${\bf P}^{-1}{\bf L}$ is diagonalizable, i.e.,
 	\begin{equation*}
 	{\bf P}_{\epsilon}^{-1}{\bf L}=\hat{\bf V}\hat{\bf D}\hat{\bf V}^{-1},
 	\end{equation*}
 	where
 	\begin{align*}
 	&\hat{\bf V}=\left[\begin{array}[c]{ccccc}
 	{\bf I}_{J}&~&~&~&{\bf V}_0\\
 	~&{\bf I}_{J}&~&~&{\bf V}_1\\
 	~&~&\ddots&~&\vdots\\
 	~&~&~&{\bf I}_{J}&{\bf V}_{N-2}\\
 	~&~&~&~&-{\bf V}
 	\end{array}\right],\qquad \hat{\bf D}=\left[\begin{array}[c]{ccccc}
 	{\bf I}_{J}&~&~&~&~\\
 	~&{\bf I}_{J}&~&~&~\\
 	~&~&\ddots&~&~\\
 	~&~&~&{\bf I}_{J}&~\\
 	~&~&~&~&{\bf D}
 	\end{array}\right],\\
 	&{\bf V}_i=({\bf L}^{-1})_{i}{\bf Z}_{\epsilon}^{-1}{\bf V}({\bf I}_{J}-{\bf D})^{-1},\quad i=0,1,...,N-2,
 	\end{align*}
 	with ${\bf V}$ and ${\bf D}$ given by Lemma \ref{diagonalizableblock}.
 \end{theorem}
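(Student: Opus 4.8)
The plan is to exhibit $\hat{\bf V}$ explicitly as a matrix of eigenvectors of ${\bf P}_{\epsilon}^{-1}{\bf L}$ and then verify the two defining relations of a diagonalization: the intertwining identity ${\bf P}_{\epsilon}^{-1}{\bf L}\,\hat{\bf V}=\hat{\bf V}\hat{\bf D}$ and the invertibility of $\hat{\bf V}$. My starting point is the block upper-triangular (``arrow'') form \eqref{preconditionedmatblockexpan}, in which the first $N-1$ diagonal blocks equal ${\bf I}_{J}$, the last diagonal block is ${\bf I}_{J}+({\bf L}^{-1})_{N-1}{\bf Z}_{\epsilon}^{-1}$, and the only nonzero off-diagonal blocks $({\bf L}^{-1})_{i}{\bf Z}_{\epsilon}^{-1}$ ($i=0,\ldots,N-2$) sit in the last block column. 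Because these off-diagonal blocks act only on the last block component of a vector, the eigenvalue-$1$ eigenspace is immediate to read off, while the remaining $J$ eigenvectors are found by solving the eigenvalue equation one block row at a time.

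For the eigenvalue $1$ I would simply check that each block column ${\bf E}_i={\bf e}_i\otimes{\bf I}_{J}$ with $i\in\{1,\ldots,N-1\}$ --- the top-left identity part of $\hat{\bf V}$ --- satisfies ${\bf P}_{\epsilon}^{-1}{\bf L}\,{\bf E}_i={\bf E}_i$. This is clear from \eqref{preconditionedmatblockexpan}: such a vector has zero last block, so the off-diagonal last column contributes nothing and each of the first $N-1$ diagonal blocks acts as the identity. These furnish the first $(N-1)J$ columns of $\hat{\bf V}$ and the leading identity blocks of $\hat{\bf D}$.

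For the remaining block I would seek an eigenvector ${\bf x}=({\bf x}_1;\cdots;{\bf x}_{N-1};{\bf x}_N)$ with ${\bf P}_{\epsilon}^{-1}{\bf L}\,{\bf x}=\mu{\bf x}$ and $\mu\ne 1$. The last block row forces $[{\bf I}_{J}+({\bf L}^{-1})_{N-1}{\bf Z}_{\epsilon}^{-1}]{\bf x}_N=\mu{\bf x}_N$, so by Lemma \ref{diagonalizableblock} the admissible values of $\mu$ are exactly $\sigma({\bf D})$ and ${\bf x}_N$ may be taken as a column of ${\bf V}$. Each upper block row then reads $(\mu-1){\bf x}_i=({\bf L}^{-1})_{i}{\bf Z}_{\epsilon}^{-1}{\bf x}_N$; since $1\notin\sigma({\bf D})$ (again by Lemma \ref{diagonalizableblock}), I may divide by $\mu-1$ to obtain ${\bf x}_i=(\mu-1)^{-1}({\bf L}^{-1})_{i}{\bf Z}_{\epsilon}^{-1}{\bf x}_N$. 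Assembling all $J$ such eigenvectors into columns and choosing the last block to be $-{\bf V}$ --- a harmless rescaling that turns $({\bf D}-{\bf I}_J)^{-1}$ into $({\bf I}_J-{\bf D})^{-1}$ --- reproduces precisely the blocks ${\bf V}_i=({\bf L}^{-1})_{i}{\bf Z}_{\epsilon}^{-1}{\bf V}({\bf I}_{J}-{\bf D})^{-1}$ and the last block $-{\bf V}$ of $\hat{\bf V}$, together with the trailing block ${\bf D}$ of $\hat{\bf D}$. This establishes ${\bf P}_{\epsilon}^{-1}{\bf L}\,\hat{\bf V}=\hat{\bf V}\hat{\bf D}$.

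It remains to observe that $\hat{\bf V}$ is invertible: it is block upper triangular with diagonal blocks ${\bf I}_{J},\ldots,{\bf I}_{J},-{\bf V}$, and ${\bf V}$ is invertible by Lemma \ref{diagonalizableblock}, so $\det\hat{\bf V}=\det(-{\bf V})\ne 0$. Combining this with the intertwining identity gives ${\bf P}_{\epsilon}^{-1}{\bf L}=\hat{\bf V}\hat{\bf D}\hat{\bf V}^{-1}$. I do not expect a genuine obstacle here: the whole argument is block-structured bookkeeping, and the only substantive inputs are the explicit arrow form \eqref{preconditionedmatblockexpan} and the two conclusions of Lemma \ref{diagonalizableblock} (the diagonalization ${\bf V}{\bf D}{\bf V}^{-1}$ of the last block, and $1\notin\sigma({\bf D})$, which makes ${\bf I}_{J}-{\bf D}$ invertible). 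The one point worth stating carefully is that the $-{\bf V}$ normalization of the last block is exactly what renders the stated closed form for ${\bf V}_i$ consistent.
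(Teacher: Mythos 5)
Your proposal is correct and follows essentially the same route as the paper: exhibit the given $\hat{\bf V}$ and $\hat{\bf D}$, verify the intertwining relation using Lemma \ref{diagonalizableblock} (both the diagonalization of the last block and $1\notin\sigma({\bf D})$), and conclude from the block-triangular structure of $\hat{\bf V}$ that it is invertible. The only cosmetic difference is that you check ${\bf P}_{\epsilon}^{-1}{\bf L}\,\hat{\bf V}=\hat{\bf V}\hat{\bf D}$ column by column via the arrow form \eqref{preconditionedmatblockexpan}, whereas the paper verifies the equivalent identity ${\bf L}\hat{\bf V}={\bf P}_{\epsilon}\hat{\bf V}\hat{\bf D}$ directly; your write-up is in fact more explicit than the paper's one-line verification.
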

 \begin{proof}
 	By Lemma \ref{diagonalizableblock}, $1\notin\sigma({\bf D})$, i.e., ${\bf I}_{J}-{\bf D}$ is invertible. Thus, ${\bf V}_{i}~(i=0,1,...,N-2)$ are well-defined. Then, it is straightforward to verify that ${\bf L}\hat{\bf V}={\bf P}_{\epsilon}\hat{\bf V}\hat{\bf D}$. Moreover, invertibility of ${\bf V}$ guarantees the invertibility of $\hat{\bf V}$. That means ${\bf P}_{\epsilon}^{-1}{\bf L}=\hat{\bf V}\hat{\bf D}\hat{\bf V}^{-1}$. The proof is complete.
 \end{proof}

 Let ${\bf O}$ denote zero matrix with proper size.
 
 \begin{lemma}\label{preconditionedmatapproximateidlem}
 	Given any $\eta\in(0,1)$, choose $\epsilon\in(0,\eta]$. Then,
 	\begin{equation*}
 	||{\bf P}_{\epsilon}^{-1}{\bf L}-{\bf I}_{NJ}||_2\leq\frac{\epsilon c_0\sqrt{N}}{1-\eta},
 	\end{equation*}
 	where $c_0:=\sup\limits_{J\in\mathbb{N}^{+}}\kappa_2({\bf M}^{\frac{1}{2}})=\sqrt{\sup\limits_{J\in\mathbb{N}^{+}}\kappa_2({\bf M})}<+\infty$ is independent of $J$ and $N$.
 \end{lemma}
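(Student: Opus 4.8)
The plan is to use the explicit block form \eqref{preconditionedmatblockexpan} of the preconditioned matrix, reduce the spectral norm of ${\bf P}_{\epsilon}^{-1}{\bf L}-{\bf I}_{NJ}$ to that of a single tall block column, and then bound each block by conjugating to the symmetric matrix already exploited in the proof of Theorem \ref{preconditionedspectrum}. From \eqref{preconditionedmatblockexpan} the matrix ${\bf P}_{\epsilon}^{-1}{\bf L}-{\bf I}_{NJ}$ has all block columns equal to zero except the last one, whose block rows are $({\bf L}^{-1})_k{\bf Z}_{\epsilon}^{-1}$ for $k=0,1,\dots,N-1$. Appending zero columns does not change the largest singular value, so the spectral norm equals $\|{\bf W}\|_2$, where ${\bf W}$ is the $NJ\times J$ block column with rows $({\bf L}^{-1})_k{\bf Z}_{\epsilon}^{-1}$. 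Since ${\bf W}^{*}{\bf W}=\sum_{k=0}^{N-1}(({\bf L}^{-1})_k{\bf Z}_{\epsilon}^{-1})^{*}({\bf L}^{-1})_k{\bf Z}_{\epsilon}^{-1}$ is a sum of positive semi-definite matrices and $\lambda_{\max}$ is subadditive on such matrices, I obtain
\[
\|{\bf P}_{\epsilon}^{-1}{\bf L}-{\bf I}_{NJ}\|_2^2=\|{\bf W}\|_2^2\leq\sum_{k=0}^{N-1}\|({\bf L}^{-1})_k{\bf Z}_{\epsilon}^{-1}\|_2^2 .
\]

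The core of the argument is a bound on each $\|({\bf L}^{-1})_k{\bf Z}_{\epsilon}^{-1}\|_2$ that is uniform in $k$, $N$ and $J$. Inserting $({\bf L}^{-1})_k=({\bf A}_0^{-1}{\bf M})^{k}{\bf A}_0^{-1}$ from \eqref{elinvlttexpres} together with ${\bf Z}_{\epsilon}^{-1}=\epsilon{\bf M}[{\bf I}_{J}-\epsilon({\bf A}_0^{-1}{\bf M})^{N}]^{-1}$ from \eqref{zepsdef}, and using that all factors are rational functions of ${\bf A}_0^{-1}{\bf M}$ and hence commute, gives
\[
({\bf L}^{-1})_k{\bf Z}_{\epsilon}^{-1}=\epsilon({\bf A}_0^{-1}{\bf M})^{k+1}[{\bf I}_{J}-\epsilon({\bf A}_0^{-1}{\bf M})^{N}]^{-1}.
\]
This block is not symmetric, so—exactly as in the proof of Theorem \ref{preconditionedspectrum}—I would conjugate by ${\bf M}^{\frac{1}{2}}$. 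Setting ${\bf B}:={\bf M}^{-\frac{1}{2}}{\bf A}_0{\bf M}^{-\frac{1}{2}}={\bf I}_{J}+\tau{\bf M}^{-\frac{1}{2}}{\bf K}{\bf M}^{-\frac{1}{2}}$, one has ${\bf M}^{\frac{1}{2}}({\bf A}_0^{-1}{\bf M}){\bf M}^{-\frac{1}{2}}={\bf B}^{-1}$, whence
\[
({\bf L}^{-1})_k{\bf Z}_{\epsilon}^{-1}=\epsilon\,{\bf M}^{-\frac{1}{2}}\,{\bf B}^{-(k+1)}[{\bf I}_{J}-\epsilon{\bf B}^{-N}]^{-1}\,{\bf M}^{\frac{1}{2}} .
\]
Taking norms and using $\|{\bf M}^{-\frac{1}{2}}\|_2\|{\bf M}^{\frac{1}{2}}\|_2=\kappa_2({\bf M}^{\frac{1}{2}})\leq c_0$ reduces the estimate to the spectral norm of the symmetric matrix ${\bf B}^{-(k+1)}[{\bf I}_{J}-\epsilon{\bf B}^{-N}]^{-1}$, a function of ${\bf B}$ whose eigenvalues are $\lambda^{N-k-1}/(\lambda^{N}-\epsilon)$ for $\lambda\in\sigma({\bf B})\subset(1,+\infty)$.

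The final step is a monotone scalar estimate in which the hypotheses $\epsilon\leq\eta$ and $\lambda>1$ enter. Since $\lambda^{N}\geq 1$, I have $\lambda^{N}-\epsilon\geq\lambda^{N}-\eta\geq\lambda^{N}(1-\eta)$, and since $\lambda^{k+1}>1$,
\[
0<\frac{\lambda^{N-k-1}}{\lambda^{N}-\epsilon}\leq\frac{\lambda^{N-k-1}}{\lambda^{N}(1-\eta)}=\frac{1}{\lambda^{k+1}(1-\eta)}\leq\frac{1}{1-\eta}
\]
uniformly in $k$ and $\lambda$. Hence $\|({\bf L}^{-1})_k{\bf Z}_{\epsilon}^{-1}\|_2\leq\epsilon c_0/(1-\eta)$ for every $k$, and substituting into the sum-of-squares bound yields $\|{\bf P}_{\epsilon}^{-1}{\bf L}-{\bf I}_{NJ}\|_2\leq\sqrt{N}\,\epsilon c_0/(1-\eta)$, as claimed.

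I expect the only genuinely delicate point to be the passage from the non-symmetric block $({\bf L}^{-1})_k{\bf Z}_{\epsilon}^{-1}$ to the symmetric function of ${\bf B}$: this conjugation is what produces the factor $c_0=\sqrt{\kappa_2({\bf M})}$, finite and $J$-independent by Assumption \ref{regularmassassump}, and it is precisely this that makes the per-block bound independent of $J$. Once the problem is diagonalized over $\sigma({\bf B})$, the remaining scalar estimate is routine, and the $\sqrt{N}$ comes solely from summing $N$ equal block bounds in the block-column norm.
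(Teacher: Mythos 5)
Your proof is correct and follows essentially the same route as the paper's: both reduce $\|{\bf P}_{\epsilon}^{-1}{\bf L}-{\bf I}_{NJ}\|_2$ to the last block column, conjugate by ${\bf M}^{\frac{1}{2}}$ to reach a symmetric function of ${\bf I}_J+\tau{\bf M}^{-\frac{1}{2}}{\bf K}{\bf M}^{-\frac{1}{2}}$ (which is where $c_0=\kappa_2({\bf M}^{\frac{1}{2}})$ enters), and finish with the same uniform scalar bound $1/(1-\eta)$ summed over $N$ blocks. The only cosmetic difference is that the paper diagonalizes all blocks simultaneously and evaluates the block-column norm exactly as $\sqrt{\max_i\sum_k g_k(\lambda_i)^2}$ before bounding, whereas you bound each block separately and invoke subadditivity of $\lambda_{\max}$ on the positive semi-definite summands of ${\bf W}^{*}{\bf W}$; both give the identical constant.
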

 \begin{proof}	
 	As ${\bf M}^{\frac{1}{2}}{\bf A}_0^{-1}{\bf M}^{\frac{1}{2}}$ is real symmetric, ${\bf M}^{\frac{1}{2}}{\bf A}_0^{-1}{\bf M}^{\frac{1}{2}}$ is orthogonally diagonalizable, i.e., there exists an orthogonal matrix ${\bf Q}\in\mathbb{R}^{J\times J}$ and a diagonal matrix ${\bf \Lambda}\in\mathbb{R}^{J\times J}$ such that ${\bf M}^{\frac{1}{2}}{\bf A}_0^{-1}{\bf M}^{\frac{1}{2}}={\bf Q}{\bf\Lambda}{\bf Q}^{\rm T}$. Since $\sigma({\bf \Lambda})=\sigma({\bf M}^{\frac{1}{2}}{\bf A}_0^{-1}{\bf M}^{\frac{1}{2}})$, ${\bf M}^{\frac{1}{2}}{\bf A}_0^{-1}{\bf M}^{\frac{1}{2}}=[{\bf I}_J+\tau{\bf M}^{-\frac{1}{2}}{\bf K}{\bf M}^{-\frac{1}{2}}]^{-1}$ implies that ${\bf O}\prec{\bf \Lambda}\preceq{\bf I}_{J}$.
 	Then, by \eqref{elinvlttexpres} and definition of ${\bf Z}_{\epsilon}$ given in \eqref{zepsdef}, we have
 	\begin{align}
 	({\bf L}^{-1})_k{\bf Z}_{\epsilon}^{-1}&=\epsilon({\bf A}_0^{-1}{\bf M})^{k+1}[{\bf I}_{J}-\epsilon({\bf A}_0^{-1}{\bf M})^N]^{-1}\notag\\
 	&=\epsilon{\bf M}^{-\frac{1}{2}}({\bf M}^{\frac{1}{2}}{\bf A}_0^{-1}{\bf M}^{\frac{1}{2}})^{k+1}[{\bf I}_{J}-\epsilon({\bf M}^{\frac{1}{2}}{\bf A}_0^{-1}{\bf M}^{\frac{1}{2}})^{N}]^{-1}{\bf M}^{\frac{1}{2}}\notag\\
 	&=\epsilon{\bf M}^{-\frac{1}{2}}{\bf Q}{\bf\Lambda}^{k+1}[{\bf I}_{J}-\epsilon{\bf\Lambda}^{N}]^{-1}{\bf Q}{\bf M}^{\frac{1}{2}},\quad k=0,1,...,N-1,\notag
 	\end{align}
 	which together with \eqref{preconditionedmatblockexpan} implies that
 	\begin{align*}
 	{\bf P}_{\epsilon}^{-1}{\bf L}-{\bf I}_{NJ}&=\left[\begin{array}[c]{cccc}
 	~&&&({\bf L}^{-1})_{0}{\bf Z}_{\epsilon}^{-1}\\
 	&~&&({\bf L}^{-1})_{1}{\bf Z}_{\epsilon}^{-1}\\
 	&&~&\vdots\\
 	&&&({\bf L}^{-1})_{N-1}{\bf Z}_{\epsilon}^{-1}
 	\end{array}\right]\\
 	&=\epsilon[{\bf I}_{N}\otimes({\bf M}^{-\frac{1}{2}}{\bf Q})]\left[\begin{array}[c]{cccc}
 	~&&&{\bf\Lambda}^{1}[{\bf I}_{J}-\epsilon{\bf\Lambda}^{N}]^{-1}\\
 	&~&&{\bf\Lambda}^{2}[{\bf I}_{J}-\epsilon{\bf\Lambda}^{N}]^{-1}\\
 	&&~&\vdots\\
 	&&&{\bf\Lambda}^{N}[{\bf I}_{J}-\epsilon{\bf\Lambda}^{N}]^{-1}
 	\end{array}\right][{\bf I}_{N}\otimes({\bf Q}^{\rm T}{\bf M}^{\frac{1}{2}})].
 	\end{align*}
 	Rewrite ${\bf\Lambda}={\rm diag}(\lambda_i)_{i=1}^{J}$. Then,
 	\begin{align*}
 	||{\bf P}_{\epsilon}^{-1}{\bf L}-{\bf I}_{NJ}||_2&\leq \epsilon ||{\bf I}_{N}\otimes({\bf M}^{-\frac{1}{2}}{\bf Q})||_{2}||{\bf I}_{N}\otimes({\bf Q}^{\rm T}{\bf M}^{\frac{1}{2}})||_2\sqrt{\left|\left|\sum\limits_{k=1}^{N}{\bf \Lambda}^{2k}({\bf I}_J-\epsilon{\bf\Lambda}^{N})^{-2}\right|\right|_2}\\
 	&=\epsilon\kappa_2({\bf M}^{\frac{1}{2}})\sqrt{\max\limits_{1\leq i\leq J}\sum\limits_{k=1}^{N}\left(\frac{\lambda_i^k}{1-\epsilon\lambda_i^N}\right)^2}\leq\epsilon c_0\sqrt{\max\limits_{1\leq i\leq J}\sum\limits_{k=1}^{N}\left(\frac{\lambda_i^k}{1-\epsilon\lambda_i^N}\right)^2}.
 	\end{align*}
 	Moreover, it is easy to check that the functions $g_k(x):=\frac{x^k}{1-\epsilon x^N}$ are monotonically increasing on $x\in[0,1]$ for each $k=1,2,...,N$. Since ${\bf O}\prec{\bf \Lambda}\preceq{\bf I}_{N}$, $\{\lambda_i|1\leq i\leq J\}\subset[0,1]$.
 	Hence,
 	\begin{equation*}
 	||{\bf P}_{\epsilon}^{-1}{\bf L}-{\bf I}_{NJ}||_2\leq \epsilon c_0\sqrt{\sum\limits_{k=1}^{N}\frac{1}{(1-\epsilon)^2}}=\frac{\epsilon c_0\sqrt{N}}{1-\epsilon}\leq \frac{\epsilon c_0\sqrt{N}}{1-\eta},
 	\end{equation*}
 	which completes the proof
 \end{proof}
 
 For any matrix ${\bf Z}\in\mathbb{R}^{m\times m}$, denote
 \begin{equation*}
 \mathcal{H}({\bf Z}):=\frac{{\bf Z}+{\bf Z}^{\rm T}}{2},\quad \mathcal{S}({\bf Z}):=\frac{{\bf Z}-{\bf Z}^{\rm T}}{2}.
 \end{equation*}
 
 Let $\lambda_{\min}(\cdot)$ and $\lambda_{\max}(\cdot)$ denote the minimal and maximal eigenvalue of a Hermitian matrix, respectively. Let $\rho(\cdot)$ denotes the spectral radius of a square matrix.
 \begin{lemma}\textnormal{(see \cite[(1.1)]{beckermann2005some})}\label{gmrescvglm}
 	Let ${\bf \Xi}{\bf q}={\bf w}$ be a real square linear system with $\mathcal{H}({\bf \Xi})\succ{\bf O}$. Then, the residuals of the iterates generated by applying GMRES to solving ${\bf \Xi}{\bf v}={\bf w}$ satisfy
 	\begin{equation*}
 	||{\bf r}_k||_2\leq\left(1-\frac{\lambda_{\min}(\mathcal{H}({\bf \Xi}))^2}{||{\bf \Xi}||_2^2}\right)^{k/2}||{\bf r}_0||_2,
 	\end{equation*}
 	where ${\bf r}_k={\bf w}-{\bf \Xi}{\bf q}_k$ with ${\bf q}_k$ $(k\geq 1)$ being the iterate solution at $k$th GMRES iteration and ${\bf q}_0$ being an arbitrary initial guess.
 \end{lemma}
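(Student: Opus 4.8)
The plan is to exploit the residual-minimizing (optimality) property of GMRES together with one carefully chosen residual polynomial, rather than any spectral information about ${\bf \Xi}$. By construction, after $k$ steps GMRES returns the residual of smallest Euclidean norm over the associated Krylov space, so that
$$||{\bf r}_k||_2=\min_{\substack{p\in\mathcal{P}_k\\ p(0)=1}}||p({\bf \Xi}){\bf r}_0||_2,$$
where $\mathcal{P}_k$ denotes the polynomials of degree at most $k$. Consequently \emph{any} admissible polynomial yields an upper bound, and I would simply insert $p(x)=(1-\alpha x)^k$ for a scalar $\alpha>0$ fixed below, which obviously satisfies $p(0)=1$. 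Writing ${\bf I}_m$ for the identity of the size of ${\bf \Xi}$, this gives
$$||{\bf r}_k||_2\le||({\bf I}_m-\alpha{\bf \Xi})^k{\bf r}_0||_2\le||{\bf I}_m-\alpha{\bf \Xi}||_2^{\,k}\,||{\bf r}_0||_2,$$
so everything reduces to estimating the single operator norm $||{\bf I}_m-\alpha{\bf \Xi}||_2$.

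To control that norm I would expand, for an arbitrary vector ${\bf v}$,
$$||({\bf I}_m-\alpha{\bf \Xi}){\bf v}||_2^2=||{\bf v}||_2^2-2\alpha\,{\bf v}^{\rm T}\mathcal{H}({\bf \Xi}){\bf v}+\alpha^2||{\bf \Xi}{\bf v}||_2^2,$$
using that the skew part contributes nothing to the quadratic form, i.e.\ ${\bf v}^{\rm T}{\bf \Xi}{\bf v}={\bf v}^{\rm T}\mathcal{H}({\bf \Xi}){\bf v}$. Here the hypothesis $\mathcal{H}({\bf \Xi})\succ{\bf O}$ is exactly what forces the linear term to be strictly negative: bounding ${\bf v}^{\rm T}\mathcal{H}({\bf \Xi}){\bf v}\ge\lambda_{\min}(\mathcal{H}({\bf \Xi}))||{\bf v}||_2^2$ and $||{\bf \Xi}{\bf v}||_2^2\le||{\bf \Xi}||_2^2||{\bf v}||_2^2$ shows the right-hand side is at most $[\,1-2\alpha\lambda_{\min}(\mathcal{H}({\bf \Xi}))+\alpha^2||{\bf \Xi}||_2^2\,]\,||{\bf v}||_2^2$. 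Choosing $\alpha=\lambda_{\min}(\mathcal{H}({\bf \Xi}))/||{\bf \Xi}||_2^2$ minimizes the bracketed quadratic in $\alpha$ and collapses it to $1-\lambda_{\min}(\mathcal{H}({\bf \Xi}))^2/||{\bf \Xi}||_2^2$. Taking the supremum over unit vectors ${\bf v}$ then yields $||{\bf I}_m-\alpha{\bf \Xi}||_2\le\sqrt{1-\lambda_{\min}(\mathcal{H}({\bf \Xi}))^2/||{\bf \Xi}||_2^2}$, and substituting into the previous display completes the estimate.

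The arithmetic is short, and the single point demanding care—the step I expect to be the genuine crux—is the reduction to one power of a \emph{fixed} matrix. It is tempting to argue step by step with a contraction factor that is reoptimized at each iteration, but the optimal relaxation parameter generally varies from step to step and such factors do not compose into a degree-$k$ polynomial. Committing from the outset to the single value $\alpha=\lambda_{\min}(\mathcal{H}({\bf \Xi}))/||{\bf \Xi}||_2^2$ and then invoking submultiplicativity of the spectral norm on $({\bf I}_m-\alpha{\bf \Xi})^k$ is precisely what converts the one-step contraction into the clean geometric rate. This is also where positive-definiteness of $\mathcal{H}({\bf \Xi})$ (rather than of ${\bf \Xi}$ itself) is indispensable: a nonsymmetric ${\bf \Xi}$ need not be contracted by $({\bf I}_m-\alpha{\bf \Xi})$ unless its field of values lies in the right half-plane, which is exactly the content of $\mathcal{H}({\bf \Xi})\succ{\bf O}$.
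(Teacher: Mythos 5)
Your argument is correct: it is the classical Elman-type derivation of this bound, obtained by feeding the single admissible polynomial $p(x)=(1-\alpha x)^k$ into the GMRES minimization property, expanding $\|({\bf I}_m-\alpha{\bf \Xi}){\bf v}\|_2^2$ so that only the symmetric part enters the cross term, and optimizing $\alpha=\lambda_{\min}(\mathcal{H}({\bf \Xi}))/\|{\bf \Xi}\|_2^2$. There is nothing to compare against in the paper itself: the authors state this lemma as an imported result, citing equation (1.1) of Beckermann, Goreinov and Tyrtyshnikov, and give no proof, so your derivation serves as a correct, self-contained substitute for that citation. Your closing remark about committing to a single fixed $\alpha$ is well taken, though the usual step-by-step variant also works here because the one-step contraction factor you obtain does not depend on the iterate, so the same $\alpha$ is optimal at every step and the factors compose; the only small point worth making explicit is that $\lambda_{\min}(\mathcal{H}({\bf \Xi}))\leq\|{\bf \Xi}\|_2$, which guarantees the bracketed quantity is nonnegative and the square root is real.
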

 \begin{theorem}\label{gmrescvgthm}
 	For any given constants $\delta\in(0,1)$, choose $\epsilon\in(0,b_{\tau}]$, where\\ $b_{\tau}:=\frac{\delta\sqrt{\tau}}{\delta\sqrt{\tau}+c_0\sqrt{T}}$ and $c_0$ is given by Lemma \ref{preconditionedmatapproximateidlem}.
 	Then, the residuals of the iterates generated by applying  GMRES to solving the preconditioned system \eqref{preconditionedsystem} satisfy
 	\begin{equation*}
 	||{\bf r}_k||_2\leq\left(\frac{2\sqrt{\delta}}{1+\delta}\right)^{k}||{\bf r}_0||_2,
 	\end{equation*}
 	where ${\bf r}_k={\bf P}_{\epsilon}^{-1}{\bf f}-{\bf P}_{\epsilon}^{-1}{\bf L}{\bf u}_k$ with ${\bf u}_k~(k\geq 1)$ being the iterative solution at $k$th GMRES iteration and ${\bf u}_0$ denoting an arbitrary initial guess.
 \end{theorem}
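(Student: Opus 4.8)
The plan is to apply the GMRES residual estimate of Lemma \ref{gmrescvglm} directly to the coefficient matrix $\mathbf{\Xi}:=\mathbf{P}_{\epsilon}^{-1}\mathbf{L}$ of the preconditioned system \eqref{preconditionedsystem}. Once this reduction is made, the entire theorem amounts to (a) checking the hypothesis $\mathcal{H}(\mathbf{\Xi})\succ\mathbf{O}$ so that the lemma is applicable, and (b) producing the two ingredients that enter its rate, namely a lower bound for $\lambda_{\min}(\mathcal{H}(\mathbf{\Xi}))$ and an upper bound for $\|\mathbf{\Xi}\|_2$. Both of these will be read off from a single quantity, the perturbation $\mathbf{E}:=\mathbf{\Xi}-\mathbf{I}_{NJ}$, whose norm is already controlled by Lemma \ref{preconditionedmatapproximateidlem}.

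The first and most delicate step is to show $\|\mathbf{E}\|_2\leq\delta$. Applying Lemma \ref{preconditionedmatapproximateidlem} with the choice $\eta=b_{\tau}$ (which lies in $(0,1)$ precisely because $\delta\in(0,1)$, so that $b_{\tau}=\frac{\delta\sqrt{\tau}}{\delta\sqrt{\tau}+c_0\sqrt{T}}\in(0,1)$), one obtains $\|\mathbf{E}\|_2\leq\frac{\epsilon c_0\sqrt{N}}{1-b_{\tau}}$. Here the specific form of $b_{\tau}$ is used for the first time: invoking the identity $N\tau=T$, hence $\sqrt{N}=\sqrt{T}/\sqrt{\tau}$, a short computation reduces the right-hand side to $\epsilon\bigl(\delta+\tfrac{c_0\sqrt{T}}{\sqrt{\tau}}\bigr)$, which equals exactly $\delta$ at $\epsilon=b_{\tau}$ and is increasing in $\epsilon$. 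Consequently $\|\mathbf{E}\|_2\leq\delta$ for every admissible $\epsilon\in(0,b_{\tau}]$. I expect this algebraic bookkeeping, confirming that $b_{\tau}$ has been reverse-engineered so that the perturbation constant comes out to precisely $\delta$, to be the main obstacle; the rest is routine.

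With $\|\mathbf{E}\|_2\leq\delta$ in hand, I would verify the hypothesis of Lemma \ref{gmrescvglm} and extract the two bounds. Writing $\mathcal{H}(\mathbf{\Xi})=\mathbf{I}_{NJ}+\mathcal{H}(\mathbf{E})$ and using $\|\mathcal{H}(\mathbf{E})\|_2\leq\|\mathbf{E}\|_2\leq\delta<1$, the standard eigenvalue perturbation bound gives $\lambda_{\min}(\mathcal{H}(\mathbf{\Xi}))\geq 1-\delta>0$, so $\mathcal{H}(\mathbf{\Xi})\succ\mathbf{O}$ and the lemma applies; likewise $\|\mathbf{\Xi}\|_2\leq 1+\|\mathbf{E}\|_2\leq 1+\delta$. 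Substituting these into the rate of Lemma \ref{gmrescvglm} and using $\lambda_{\min}(\mathcal{H}(\mathbf{\Xi}))^2\geq(1-\delta)^2$ together with $\|\mathbf{\Xi}\|_2^2\leq(1+\delta)^2$ yields $1-\frac{\lambda_{\min}(\mathcal{H}(\mathbf{\Xi}))^2}{\|\mathbf{\Xi}\|_2^2}\leq 1-\frac{(1-\delta)^2}{(1+\delta)^2}=\frac{4\delta}{(1+\delta)^2}$, whose square root is $\frac{2\sqrt{\delta}}{1+\delta}$. Raising to the $k$th power gives the claimed residual bound. Finally, since $\delta\in(0,1)$ one has $\frac{2\sqrt{\delta}}{1+\delta}<1$, equivalently $(1-\sqrt{\delta})^2>0$, so the convergence is genuinely linear, and because the contraction factor depends only on $\delta$ and not on $N$ or $J$, the mesh independence asserted in the introduction follows.
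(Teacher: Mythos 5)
Your proposal is correct and follows essentially the same route as the paper: bound $\|{\bf P}_{\epsilon}^{-1}{\bf L}-{\bf I}_{NJ}\|_2\leq\delta$ via Lemma \ref{preconditionedmatapproximateidlem} with $\eta=b_{\tau}$ (your algebra using $N\tau=T$ matches the paper's $\frac{\epsilon c_0\sqrt{N}}{1-b_{\tau}}=\frac{\epsilon\delta}{b_{\tau}}\leq\delta$), then feed $\lambda_{\min}(\mathcal{H}(\cdot))\geq 1-\delta$ and $\|{\bf P}_{\epsilon}^{-1}{\bf L}\|_2\leq 1+\delta$ into Lemma \ref{gmrescvglm}. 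The only cosmetic difference is that you bound the norm by the triangle inequality where the paper expands $({\bf P}_{\epsilon}^{-1}{\bf L})^{\rm T}{\bf P}_{\epsilon}^{-1}{\bf L}$; both yield $(1+\delta)^2$.
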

 \begin{proof}
 	Denote ${\bf\Xi}={\bf P}_{\epsilon}^{-1}{\bf L}-{\bf I}_{NJ}$.
 	Since $b_{\tau}\in(0,1)$, Lemma \ref{preconditionedmatapproximateidlem} is applicable. By Lemma \ref{preconditionedmatapproximateidlem}, we have
 	\begin{equation*}
 	||{\bf\Xi}||_2\leq \frac{\epsilon c_0\sqrt{N}}{1-b_{\tau}}=\frac{\epsilon\delta}{b_{\tau}}\leq\delta.
 	\end{equation*}
 	Then,
 	\begin{equation}\label{hpartpositive}
 	\mathcal{H}({\bf P}_{\epsilon}^{-1}{\bf L})={\bf I}_{NJ}+\mathcal{H}({\bf\Xi})\succeq (1-\delta){\bf I}_{NJ}\succ{\bf O},
 	\end{equation}
 	implies that Lemma \ref{gmrescvglm} is applicable to the preconditioned system \eqref{preconditionedsystem}. It remains to estimate $\lambda_{\min}(\mathcal{H}({\bf P}_{\epsilon}^{-1}{\bf L}))^2$ and $||{\bf P}_{\epsilon}^{-1}{\bf L}||_{2}^2$.
 	
 	Clearly, \eqref{hpartpositive} implies that
 	\begin{equation*}
 	\lambda_{\min}(\mathcal{H}({\bf P}_{\epsilon}^{-1}{\bf L}))^2\geq (1-\delta)^2.
 	\end{equation*}
 	Moreover,
 	\begin{equation*}
 	||{\bf P}_{\epsilon}^{-1}{\bf L}||_{2}^2=||({\bf P}_{\epsilon}^{-1}{\bf L})^{\rm T}{\bf P}_{\epsilon}^{-1}{\bf L}||_2=||{\bf I}_{NJ}+{\bf \Xi}+{\bf \Xi}^{\rm T}+{\bf \Xi}^{\rm T}{\bf\Xi}||_2\leq (1+2\delta+\delta^2)=(1+\delta)^2.
 	\end{equation*}
 	Then, Lemma \ref{gmrescvglm} implies that
 	\begin{align*}
 	||{\bf r}_{k}||_2&\leq\left(1-\frac{\lambda_{\min}(\mathcal{H}({\bf P}_{\epsilon}^{-1}{\bf L}))^2}{||{\bf P}_{\epsilon}^{-1}{\bf L}||_{2}^2}\right)||{\bf r}_0||_2\\
 	&\leq\left(1-\frac{(1-\delta)^2}{(1+\delta)^2}\right)^{k/2}||{\bf r}_0||_2=\left(\frac{2\sqrt{\delta}}{1+\delta}\right)^k||{\bf r}_0||_2,
 	\end{align*}
 	which completes the proof.
 \end{proof}
 
 \begin{rem}
 	Theorem \ref{gmrescvgthm} shows that GMRES for the preconditioned system \eqref{preconditionedsystem} has a linear convergence rate independent of system size whenever $0<\epsilon\lesssim\sqrt{\tau}$.  Actually, as illustrated by numerical results in Section 5, taking $\epsilon=\mathcal{O}(\tau)$ already leads to a fast convergence of GMRES.
 \end{rem}
 \section{Implementation}\label{implementation}
 In this section, we discuss on how to efficiently implement the GMRES method for the preconditioned system \eqref{preconditionedsystem}. In GMRES iteration, it requires to compute the matrix-vector product, ${\bf P}_{\epsilon}^{-1}({\bf L}{\bf v})$ for some given vector ${\bf v}$. 
 In this section, we present a fast implementation for computing the matrix-vector product.
 Since our presented fast implementation also works when $\partial_t$ is discretized by multi-step backward difference, we start with multi-step-backward-difference discretization of $\partial_t$ to describe the fast implementation.

 Discretizing $\partial_t$ by a $p$-step backward difference scheme, then the corresponding ${\bf L}$ is as follows \cite{mcdonald2018preconditioning}
 \begin{equation}\label{pstepscheme}
 {\bf L}={\bf R}\otimes{\bf M}+\tau{\bf I}_{N}\otimes{\bf K},
 \end{equation}
 where `$\otimes$' denotes the Kronecker product,
 \begin{equation*}
 {\bf R}:=\left[\begin{array}[c]{cccccc}
 r_0&&&&&\\
 r_1&r_0&&&&\\
 \vdots&\ddots&\ddots&&&\\
 r_p&\ddots&\ddots&\ddots&&\\
 &\ddots&\ddots&r_1&r_0&\\
 &&r_p&\ldots&r_1&r_0
 \end{array}\right]\in\mathbb{R}^{N\times N}
 \end{equation*}
 Note that if $p=1$, $r_0=1$, $r_1=-1$, then the $p$-step \eqref{pstepscheme} scheme reduces to the backward difference scheme presented in Section \ref{discretization}. For $p$-step scheme, the corresponding BEC preconditioner ${\bf P}_{\epsilon}$ is defined as follows
 \begin{equation}\label{pstepbecpreconditioner}
 {\bf P}_{\epsilon}={\bf R}_{\epsilon}\otimes{\bf M}+\tau{\bf I}_{N}\otimes{\bf K},
 \end{equation}
 where
 \begin{equation*}
 {\bf R}_{\epsilon}=\left[\begin{array}[c]{cccccc}
 r_0&&\epsilon r_{p}&\ldots&\epsilon r_2&\epsilon r_1\\
 r_1&r_0&&\ddots&\ddots&\epsilon r_2\\
 \vdots&\ddots&\ddots&&\ddots&\vdots\\
 r_p&\ddots&\ddots&\ddots&&\epsilon r_p\\
 &\ddots&\ddots&r_1&r_0&\\
 &&r_p&\ldots&r_1&r_0
 \end{array}\right]\in\mathbb{R}^{N\times N}.
 \end{equation*}

 For a given vector ${\bf v}\in\mathbb{R}^{NJ\times 1}$, to compute ${\bf P}_{\epsilon}^{-1}{\bf L}{\bf v}$ is equivalent to compute $\tilde{\bf v}={\bf L}{\bf v}$ and ${\bf P}_{\epsilon}^{-1}\tilde{\bf v}$. Hence, to compute the preconditioned-matrix-vector product efficiently, it suffices to compute both ${\bf P}_{\epsilon}^{-1}{\bf v}$ and ${\bf L}{\bf v}$ efficiently for a given vector ${\bf v}$.
 
 \begin{proposition}\textnormal{(see \cite[(2)]{van2000ubiquitous})}\label{krnckprodmatvec}
 	For any ${\bf B}\in\mathbb{C}^{p_1\times q_1},~{\bf C}\in\mathbb{C}^{p_2\times q_2},~{\bf Y}=({\bf y}_1,{\bf y}_2,...,{\bf y}_{q_1})\in\mathbb{C}^{q_2\times q_1}$, it holds $({\bf B}\otimes{\bf C})({\bf y}_1;{\bf y}_2;...;{\bf y}_{q_1})=({\bf C}{\bf Y}{\bf B}^{\rm T})(:)=[({\bf B}({\bf C}{\bf Y})^{\rm T})^{\rm T}](:)$.
 \end{proposition}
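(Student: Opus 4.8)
The plan is to establish the first identity $({\bf B}\otimes{\bf C})({\bf y}_1;{\bf y}_2;\cdots;{\bf y}_{q_1})=({\bf C}{\bf Y}{\bf B}^{\rm T})(:)$ by a direct block-wise computation, and then to obtain the second identity at once from transposition. Writing ${\bf B}=(b_{ij})$, the Kronecker product ${\bf B}\otimes{\bf C}$ has the familiar block form in which the $(i,j)$-th block, of size $p_2\times q_2$, is $b_{ij}{\bf C}$. First I would multiply this block matrix against the stacked vector $({\bf y}_1;{\bf y}_2;\cdots;{\bf y}_{q_1})$ and read off its $i$-th block of length $p_2$.

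The key step is to recognize the resulting sum. The $i$-th block of $({\bf B}\otimes{\bf C})({\bf y}_1;\cdots;{\bf y}_{q_1})$ equals $\sum_{j=1}^{q_1}b_{ij}{\bf C}{\bf y}_j={\bf C}\sum_{j=1}^{q_1}b_{ij}{\bf y}_j$. Since $\sum_{j=1}^{q_1}b_{ij}{\bf y}_j$ is precisely ${\bf Y}$ multiplied by the transpose of the $i$-th row of ${\bf B}$, i.e. ${\bf Y}({\bf B}^{\rm T})_{:,i}$, this block equals ${\bf C}{\bf Y}({\bf B}^{\rm T})_{:,i}$, which is the $i$-th column of ${\bf C}{\bf Y}{\bf B}^{\rm T}$. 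Stacking these columns over $i=1,\ldots,p_1$ reproduces the column-major vectorization $({\bf C}{\bf Y}{\bf B}^{\rm T})(:)$, establishing the first identity.

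For the second identity, I would simply observe that $({\bf C}{\bf Y}{\bf B}^{\rm T})^{\rm T}={\bf B}({\bf C}{\bf Y})^{\rm T}$, whence ${\bf C}{\bf Y}{\bf B}^{\rm T}=[{\bf B}({\bf C}{\bf Y})^{\rm T}]^{\rm T}$, and therefore the two matrices have identical vectorizations. This requires no further computation.

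The main obstacle here is purely one of bookkeeping rather than substance: one must be careful that the scalar coefficient linking ${\bf y}_j$ to the $i$-th output block is $b_{ij}$ and not $b_{ji}$, so that the linear combination assembles into ${\bf Y}$ times the $i$-th column of ${\bf B}^{\rm T}$ rather than a spurious transpose. Once this indexing is pinned down, both identities follow immediately.
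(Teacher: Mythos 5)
Your argument is correct and complete: the blockwise computation correctly identifies the $i$-th output block as $\sum_{j}b_{ij}{\bf C}{\bf y}_j={\bf C}{\bf Y}({\bf B}^{\rm T})_{:,i}$, i.e.\ the $i$-th column of ${\bf C}{\bf Y}{\bf B}^{\rm T}$, and the second identity is indeed just transposition. Note that the paper itself offers no proof of this proposition --- it is quoted directly from the cited reference on the Kronecker product --- so your direct verification is exactly the standard argument one would supply, with the indexing ($b_{ij}$ versus $b_{ji}$) handled correctly.
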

Proposition \cite{van2000ubiquitous} shows that if two matrices ${\bf B}$ and ${\bf C}$ have fast matrix-vector product, then their Kronecker product ${\bf B}\otimes{\bf C}$ also have fast matrix-vector product. More specifically, if computing  ${\bf B}\in\mathbb{C}^{p\times p}$ ($ {\bf C}\in\mathbb{C}^{q\times q}$, respectively) times a vector requires operations no more than $c_1$ ($c_2$, respectively), then computing ${\bf B}\otimes{\bf C}$ times a vector requires operations no more than $c_1q+c_2p$. 
\begin{definition}
A square matrix ${\bf G}\in\mathbb{C}^{m\times m}$ is called a Toeplitz if and only if it has the form of
\begin{equation*}
	{\bf G}=\left[
	\begin{array}
		[c]{ccccc}
		g_0 & g_{-1} &\ldots   &g_{2-m}  &g_{1-m}\\
		g_1 & g_0&g_{-1}   &\ldots &g_{2-m}\\
		\vdots&\ddots &\ddots&\ddots&\vdots\\
		g_{m-2} & \ldots&g_1& g_0 &g_{-1}\\
		g_{m-1} & g_{m-2}& \ldots & g_1 & g_{0}
	\end{array}
	\right].
\end{equation*}
If additionally  $g_{i-j}=g_{i-j-m}$ for all $i-j\geq 1$, then ${\bf G}$ is called a circulant matrix.
\end{definition}

Denote
\begin{equation}\label{fouriermatdef}
	{\bf F}_{m}=\frac{1}{\sqrt{m}}\left[\theta_m^{(i-1)(j-1)}\right]_{i,j=1}^{m},\quad \theta_m=\exp\left(\frac{2\pi{\bf i}}{m}\right),\quad {\bf i}=\sqrt{-1}.
\end{equation}
 ${\bf F}_m$ is called a Fourier transform matrix. ${\bf F}_{m}^{*}$ (${\bf F}_{m}$, respectively) times a vector is equivalent to Fourier transform (inverse Fourier transform, respectively) of the vector up to a scaling constant. Hence,  ${\bf F}_{m}^{*}$ (or ${\bf F}_{m}$) times a vector can be fast computed by algorithms of fast Fourier transform (FFT), which requires $\mathcal{O}(m\log m)$ operations and $\mathcal{O}(m)$ storage.
 
 For a vector ${\bf v}$, by ${\rm diag}({\bf v})$, we denote the diagonal matrix with entries of ${\bf v}$ as its diagonal elements. It is well known that any circulant matrix ${\bf C}\in\mathbb{C}^{m\times m}$ is diagonalizable by Fourier transform matrix (see, e.g., \cite{ng2004iterative,chan2007introduction}):
 \begin{equation*}
{\bf C}={\bf F}_n{\rm diag}(\sqrt{m}{\bf F}_m^{*}{\bf C}(:,1)){\bf F}_m^{*},
 \end{equation*}
where ${\bf C}(:,1)$ denotes the first column of ${\bf C}$. Hence, computing an $m\times m$ circulant matrix times a vector requires $\mathcal{O}(N\log N)$ operations and $\mathcal{O}(N)$ storage by FFTs. Any Toeplitz matrix ${\bf G}\in\mathbb{C}^{m\times m}$ can be embedded into a larger circulant matrix (see, e.g., \cite{ng2004iterative,chan2007introduction,chan1996conjugate}): 
\begin{equation*}
\left[
\begin{array}[c]{cc}
	{\bf G}&\times\\
	\times&{\bf G}
\end{array}
\right],
\end{equation*}
 where ``$\times$" here denotes some proper blocks. Hence, an $m\times m$ Toeplitz matrix times a vector ${\bf G}{\bf v}$ can be computed as
 \begin{equation*}
\left[
\begin{array}[c]{cc}
	{\bf G}&\times\\
	\times&{\bf G}
\end{array}
\right]\left[\begin{array}[c]{c}
{\bf v}\\
{\bf 0}
\end{array}\right]=\left[\begin{array}[c]{c}
{\bf G}{\bf v}\\
\times 
\end{array}\right],
 \end{equation*}
 which requires $\mathcal{O}(m\log m)$ operations and $\mathcal{O}(m)$ storage.
 
 We firstly discuss the fast computation of ${\bf L}{\bf v}$ for a given vector ${\bf v}$. Note that ${\bf M}$, ${\bf I}_N$ and ${\bf K}$ are all sparse matrices. Moreover, for small $p$, ${\bf R}$ is sparse and thus ${\bf L}$ is sparse. It is well-known that computing a sparse matrix times a vector requires a linear complexity. In other words, when $p$ is small, the computation of ${\bf L}{\bf v}$ requires $\mathcal{O}(NJ)$ storage and operations. Notice also that ${\bf R}$ is a Toeplitz matrix no matter how large $p$ is. As ${\bf L}$ consists of Kronecker products of Toeplitz matrix and sparse sparse matrix, sparse matrix and sparse matrix, Proposition \ref{krnckprodmatvec} implies that the computation of ${\bf L}{\bf v}$ requires $\mathcal{O}(JN\log N)$ operations and $\mathcal{O}(JN)$ storage by FFTs no matter how big $p$ is.

 Now, we focus on the fast computation of ${\bf P}_{\epsilon}^{-1}{\bf y}$ for a given vector $y\in\mathbb{R}^{NJ\times 1}$. To this end, we exploit an interesting property of the matrix ${\bf R}_{\epsilon}$, i.e., its diagonalizable property. From \cite[Theorem 2.10]{bini2005numerical}, we know that ${\bf R}_{\epsilon}$ can be diagonalized as follows
 \begin{equation}\label{repsdiagform}
 {\bf R}_{\epsilon}={\bf D}_{\epsilon}^{-1}{\bf F}_{N}^{*}{\bf\Lambda}_{\epsilon}{\bf F}_{N}{\bf D}_{\epsilon},
 \end{equation}
 where ${\bf F}_N$ is defined in \eqref{fouriermatdef},
 \begin{align*}
 &{\bf D}_{\epsilon}={\rm diag}\left(\epsilon^{\frac{0}{N}},\epsilon^{\frac{1}{N}},...,\epsilon^{\frac{N-1}{N}}\right),\qquad {\bf \Lambda}_{\epsilon}={\rm diag}(\lambda_{0}^{(\epsilon)},\lambda_{1}^{(\epsilon)},...,\lambda_{N-1}^{(\epsilon)}),\\
 &\lambda_{k}^{(\epsilon)}=\sum\limits_{j=0}^{p}r_j\epsilon^{\frac{j}{N}}\theta_N^{kj},\quad k=0,1,...,N-1,
 \end{align*}
$\theta_N$ is defined in \eqref{fouriermatdef}.
 
 When $p$ is small, then it is clear that the computation of $\{\lambda_{k}^{(\epsilon)}\}_{k=0}^{N-1}$ requires $\mathcal{O}(N)$ operations and storage. When $p$ is large, one can exploit the fact that
 \begin{equation*}
 (\lambda_{0}^{(\epsilon)},\lambda_{1}^{(\epsilon)},...,\lambda_{N-1}^{(\epsilon)})^{\rm T}=\sqrt{N}{\bf F}_N(r_0\epsilon^{\frac{0}{N}},r_1\epsilon^{\frac{1}{N}},...,r_p\epsilon^{\frac{p}{N}},0,0,...,0)^{\rm T}.
 \end{equation*} 
 Hence, using IFFT, the computation of $\{\lambda_{k}^{(\epsilon)}\}_{k=0}^{N-1}$ requires $\mathcal{O}(N\log N)$ operations and $\mathcal{O}(N)$ storage, no matter how big $p$ is.
 
 By \eqref{repsdiagform}, ${\bf P}_{\epsilon}$ can be rewritten as the following block diagonalization form
 \begin{equation}\label{becblockdiag}
 {\bf P}_{\epsilon}=[({\bf D}_{\epsilon}^{-1}{\bf F}_{N}^{*})\otimes{\bf I}_J]{\rm blockdiag}({\bf B}_0,{\bf B}_1,...,{\bf B}_{N-1})[({\bf F}_{N}{\bf D}_{\epsilon})\otimes{\bf I}_J],
 \end{equation}
 where
 \begin{equation*}
 {\bf B}_k=\lambda_{k}^{(\epsilon)}{\bf M}+\tau{\bf K},\quad k=0,1,...,N-1.
 \end{equation*}
 Let ${\bf y}=({\bf y}^1;{\bf y}^2;\cdots;{\bf y}^N)\in\mathbb{R}^{NJ\times 1}$ with ${\bf y}^k\in\mathbb{R}^{J\times 1}~(k=1,2,...,N)$ be a given vector. Then, the computation of ${\bf z}={\bf P}_{\epsilon}^{-1}{\bf y}$ can be equivalently rewritten as the following 3 steps:
 \begin{align}
 {\rm Step~1}: \quad & {\rm Compute~}\tilde{{\bf y}}=\left[({\bf F}_N {\bf D}_{\epsilon})\otimes {\bf I}_{J} \right]{\bf y},\label{ytild}\\
 {\rm Step~2}: \quad & {\rm Solve~} {\bf B}_{k-1}\tilde{\bf z}^k=\tilde{\bf y}^k{\rm ~for~}\tilde{\bf z}^k,~ k=1,2,...,N,{\rm ~where~}\left(\tilde{\bf y}^1;\tilde{\bf y}^2;\cdots;\tilde{\bf y}^N\right)=\tilde{{\bf y}},\label{blockdiaginv}\\
 {\rm Step~3}: \quad & {\rm Compute~}{\bf z}=\left[({\bf D}_{\epsilon}^{-1}{\bf F}_N^{\ast})\otimes {\bf I}_{J} \right]\tilde{\bf z},{\rm ~where~}\tilde{\bf z}=\left(\tilde{\bf z}^1;\tilde{\bf z}^2;\cdots;\tilde{\bf z}^N\right).\label{zcompt}
 \end{align}
 Using FFTs and Proposition \ref{krnckprodmatvec}, it is easy to see that \eqref{ytild} and \eqref{zcompt} requires $\mathcal{O}(JN\log N)$ operations and $\mathcal{O}(JN)$ storage. If the spatial discretization is finite difference method or finite element method with uniform square grid and the diffusion coefficient function $a$ is a constant, then ${\bf B}_{k}'s~(k=0,1,...,N-1)$ are all diagonalizable by means of fast sine transform (see \cite{mcdonald2018preconditioning}), in the case of which the $N$ many linear systems in \eqref{blockdiaginv} can be fast and directly solved with $\mathcal{O}(NJ\log J)$ operations and $\mathcal{O}(NJ)$ storage. In a more general situation that ${\bf B}_{k}'s$ are not diagonalizable, one can use some efficient spatial solvers, such as a multigrid method to solve the linear systems in \eqref{blockdiaginv}, for which only a few iterations are required since ${\bf P}_{\epsilon}$ serves as a preconditioner. Although the linear systems in \eqref{blockdiaginv} are complex, numerical results in Section 5 show that one iteration of V-cycle geometric multigrid for solving each linear system in \eqref{blockdiaginv} already leads to a fast convergence of GMRES for the preconditioned system. Solving the linear systems in \eqref{blockdiaginv} by V-cycle multigrid method with a fixed number of iterations, it requires $\mathcal{O}(NJ)$ operations and storage.
 
 It is remarkable to note that only half of the $N$ many systems in \eqref{blockdiaginv} need to be solved, the reason of which is explained as follows. From \eqref{ytild}, we know that the right hand sides in \eqref{blockdiaginv} can be expressed as
 \begin{align*}
 \tilde{\bf y}^{k+1}=\frac{1}{\sqrt{N}}\sum\limits_{j=0}^{p}\epsilon^{\frac{j}{N}}\theta_{N}^{kj}{\bf y}^{j+1},\quad k=0,1,...,N-1.
 \end{align*}
 Recall that the matrices in \eqref{blockdiaginv} have the following expressions
 \begin{equation*}
 {\bf B}_{k}=\left(\sum\limits_{j=0}^{p}r_j\epsilon^{\frac{j}{N}}\theta_{N}^{kj}\right){\bf M}+\tau{\bf K},\quad k=0,1,...,N-1.
 \end{equation*}
 
 Let ${\rm conj}(\cdot)$ denote conjugate of a matrix or a vector.
 Then,
 \begin{align*}
 &{\rm conj}(\tilde{\bf y}^{k+1})=\frac{1}{\sqrt{N}}\sum\limits_{j=0}^{p}\epsilon^{\frac{j}{N}}\theta_{N}^{-kj}{\bf y}^{j+1}=\frac{1}{\sqrt{N}}\sum_{j=0}^{p}\epsilon^{\frac{j}{N}}\theta_{N}^{(N-k)j}{\bf y}^{j+1}=\tilde{\bf y}^{N-k+1},\quad 1\leq k\leq N-1,\\
 &{\rm conj}({\bf B}_{k})=\left(\sum\limits_{j=0}^{p}r_j\epsilon^{\frac{j}{N}}\theta_{N}^{-kj}\right){\bf M}+\tau{\bf K}=\left(\sum\limits_{j=0}^{p}r_j\epsilon^{\frac{j}{N}}\theta_{N}^{(N-k)j}\right){\bf M}+\tau{\bf K}={\bf B}_{N-k},~ 1\leq k\leq N-1.
 \end{align*}
 That means the unknowns in \eqref{blockdiaginv} hold equalities: $\tilde{\bf z}^{k+1}={\rm conj}(\tilde{\bf z}^{N-k+1})$ for $k=1,2,...,N-1$. Hence, only the first $\left\lceil\frac{N+1}{2}\right\rceil$ many linear systems in \eqref{blockdiaginv} need to be solved.
 
 Hence, when ${\bf M}$ and ${\bf K}$ are diagonalizable by the fast sine transform, the computation of ${\bf P}_{\epsilon}^{-1}{\bf L}{\bf v}$ for a given vector ${\bf v}$ can be fast and exactly implemented, which requires $\mathcal{O}(NJ)$ storage and $\mathcal{O}(NJ\log J)$ operations. In other more general cases, the computation of ${\bf P}_{\epsilon}^{-1}{\bf L}{\bf v}$ for a given vector ${\bf v}$ can be approximately implemented by V-cycle multigrid method with a fixed number of iterations, which requires $\mathcal{O}(NJ)$ operations and storage. Hence, using the multigrid method, the computation of ${\bf P}_{\epsilon}^{-1}{\bf L}{\bf v}$ for a given vector ${\bf v}$ requires $\mathcal{O}(NJ)$ storage and $\mathcal{O}(NJ)$ operations. 
 
 From the above discussion, we see that each preconditioned GMRES iteration requires even cheaper operations by using the multigrid inner solver than that by using the fast sine transform solver. However, unlike multigrid method, the fast sine transform solver is an exact solver for \eqref{blockdiaginv} that does not bring additional iterative error.  Hence, when the fast sine transform solver is applicable, we prefer to use the fast sine transform solver.
 \section{Numerical Results}
 In this section, we test the performance of the proposed BEC preconditioner through examples of heat equation, convection diffusion equation and compare it with block circulant preconditioner proposed in \cite{mcdonald2018preconditioning}. Finite element discretization with $Q1$ element and uniform square mesh is used to discretize the spatial terms of all the examples in this section. In Examples \ref{varcoeffheateq} and \ref{cdeq}, the mass matrix ${\bf M}$ and the stiffness matrix ${\bf K}$ are generated by the IFISS package \cite{ifiss}.
 All numerical experiments are performed via MATLAB R2016a on a workstation equipped with dual Xeon E5-2690 v4 14-Cores 2.6GHz CPUs,  256GB RAM running CentOS Linux version 7.
 
 Restarted GMRES method is employed to solve the preconditioned systems. The restarting number of GMRES is set as $50$. The tolerance of GMRES is set as $||{\bf r}_k||_2\leq 10^{-7}||{\bf r}_0||_2$, where ${\bf r}_k$ denotes preconditioned residual vector at $k$th GMRES iteration. The zero vector is used as initial guess of GMRES method.

 For convenience, the block circulant preconditioner is denoted by BC. As the BC preconditioner is a special case of BEC preconditioner. Hence, we use the same algorithm for implementation of BC preconditioner as the one used for that of BEC preconditioner. We also denote GMRES with BC and BEC preconditioners by GMRES-BC and GMRES-BEC, respectively.
 
 Since preconditioned residual error by GMRES-BEC has a different definition from that by GMRES-BC, we define the following unpreconditioned relative residual error to measure the accuracy of GMRES-BEC and GMRES-BC for fair comparison:
 \begin{equation*}
 {\rm RES}:=\frac{||{\bf f}-{\bf L}{\bf u}_{\rm iter}||_2}{||{\bf f}||_2},
 \end{equation*}
where ${\bf u}_{\rm iter}$ denotes some iterative solution.
 
By `Iter', we denote the iteration number of restarted GMRES; by `DoF', the number of degrees of freedom, i.e., the number of unknowns, and by `CPU', the computational time in seconds. 
 
 For all the numerical experiments in this section, we take $\epsilon=\min\{0.5,0.5\tau\}$ for the BEC preconditioner.
 
 \begin{expl}\label{constheateq}
 	{\rm 
 		The first example is heat equation \eqref{heateq}--\eqref{inicd} with
 		\begin{equation*}
 		\Omega=(0,1)\times(0,1),\quad T=1,\quad f\equiv 0,\quad a\equiv 10^{-5},\quad g\equiv 0,\quad u_0=x(x-1)y(y-1).
 		\end{equation*}
 		For Example \ref{constheateq}, the corresponding ${\bf B}_{k}'s$ in \eqref{blockdiaginv} is diagonalizable by sine transform. Hence, we implement the matrix-vector multiplication by fast sine transform for Example \ref{constheateq}. To demonstrate that the proposed preconditioning method works for multi-step temporal discretization scheme, we firstly discretize the temporal derivative of Example \ref{constheateq} by the two-step backward difference scheme (BDF2). The BDF2 scheme is defined by $r_0=\frac{3}{2}$, $r_1=-2$, $r_2=\frac{1}{2}$ and $p=2$ (see the meanings of $r_k$'s and $p$ in \eqref{pstepscheme}). The results of GMRES-BEC and GMRES-BC for all-at-once system from BDF2 temporal scheme is listed in Table \ref{bdf2table}. Table \ref{bdf2table} shows that (i) both GMRES-BEC and GMRES-BC work for BDF2-type all-at-once system; (ii) GMRES-BEC is more efficient than GMRES-BC in terms of computational time and iteration number; (iii) GMRES-BEC is more accurate than GMRES-BC in terms of {\rm RES} measure.
 		 
 			\begin{table}[H]
 			\begin{center}
 				\caption{Performance of GMRES-BC and GMRES-BEC on Example \ref{constheateq} discretized by BDF2 scheme}\label{bdf2table}
 				\setlength{\tabcolsep}{0.8em}
 				\begin{tabular}[c]{ccc|ccc|ccc}
 					\hline
 					~&~&~& \multicolumn{3}{c|}{GMRES-BEC}& \multicolumn{3}{c}{GMRES-BC}\\
 					\cline{4-9}
 					$N$&$J+1$&DoF&Iter&CPU&{\rm RES}&Iter&CPU&{\rm RES}\\
 					\hline
 					\multirow{5}{*}{$2^{6}$}
 					&$2^6$   &254016   &13&1.40 &9.98e-7   &82&4.06  &6.00e-3 \\
 					&$2^7$   &1032256  &13&2.82 &9.98e-7   &80&12.53 &5.10e-3 \\
 					&$2^8$   &4161600  &13&12.81&9.98e-7   &79&71.35 &5.20e-3 \\
 					&$2^{9}$ &16711744 &13&53.12&9.98e-7   &80&299.86&5.00e-3 \\
 					\hline
 						\multirow{5}{*}{$2^{7}$}
 					&$2^6$   &508032   &13&1.36  &1.01e-6   &80&6.51  &9.20e-3 \\
 					&$2^7$   &2064512  &13&5.21  &1.01e-6   &77&28.78 &9.00e-3 \\
 					&$2^8$   &8323200  &13&27.97 &1.01e-6   &77&158.63&8.60e-3 \\
 					&$2^{9}$ &33423488 &13&105.35&1.01e-6   &76&567.45&8.90e-3 \\
 					\hline
 						\multirow{5}{*}{$2^{8}$}
 					&$2^6$   &1016064  &13&2.27  &1.01e-6   &71&11.18  &1.42e-2 \\
 					&$2^7$   &4129024  &13&12.44 &1.01e-6   &70&60.81  &1.39e-2 \\
 					&$2^8$   &16646400 &13&52.75 &1.01e-6   &67&259.32 &1.32e-2 \\
 					&$2^{9}$ &66846976 &13&204.88&1.01e-6   &68&1013.56&1.28e-2 \\
 					\hline
 					\multirow{5}{*}{$2^{9}$}
 				    &$2^6$   &2032128   &12&4.84  &3.03e-6   &65&25.53  &1.77e-2 \\
 				    &$2^7$   &8258048   &12&25.00 &3.03e-6   &64&127.27 &1.69e-2 \\
 				    &$2^8$   &33292800  &12&100.23&3.03e-6   &61&480.22 &1.57e-2 \\
 				    &$2^{9}$ &133693952 &12&385.97&3.03e-6   &60&1796.61&1.57e-2 \\ 
 					\hline	
 				\end{tabular}
 			\end{center}
 		\end{table}	
 	
 	In the rest of this section, we focus on testing the performance of GMRES-BC and GMRES-BEC for all-at-once system from 1-step backward difference scheme\eqref{backwarddiff}. By BDF, we denote the 1-step backward difference scheme \eqref{backwarddiff}. 
 		The results of GMRES-BEC and GMRES-BC preconditioner for solving Example \ref{constheateq} discretized by BDF are listed in Tables \ref{table1}.
 		
 		Table \ref{table1} shows that (i) GMRES-BEC is more efficient than GMRES-BC in terms of CPU and iteration number; (ii) the convergence rates of both GMRES-BEC and GMRES-BC are independent of temporal and spatial stepsizes; (iii) GMRES-BEC is more accurate than GMRES-BC in terms of {\rm RES}.
 		
 		\begin{table}[H]
 			\begin{center}
 				\caption{Performance of GMRES-BC and GMRES-BEC on Example \ref{constheateq} discretized by BDF scheme}\label{table1}
 				\setlength{\tabcolsep}{0.8em}
 					\begin{tabular}[c]{ccc|ccc|ccc}
 					\hline
 					~&~&~& \multicolumn{3}{c|}{GMRES-BEC}& \multicolumn{3}{c}{GMRES-BC}\\
 					\cline{4-9}
 					$N$&$J+1$&DoF&Iter&CPU&{\rm RES}&Iter&CPU&{\rm RES}\\
 					\hline
 					\multirow{5}{*}{$2^{6}$}
 					&$2^6$   &254016   &2 &0.55 &9.11e-11   &13&1.21 &2.09e-5 \\
 					&$2^7$   &1032256  &2 &0.73 &1.69e-10   &13&2.71 &2.80e-5 \\
 					&$2^8$   &4161600  &2 &3.08 &2.23e-10   &13&13.81&3.08e-5 \\
 					&$2^{9}$ &16711744 &2 &12.65&2.46e-10   &13&53.27&3.16e-5 \\
 					\hline
 					\multirow{5}{*}{$2^{7}$}
 					&$2^6$   &508032   &2 &0.31  &2.27e-11  &13&1.18 &2.09e-5 \\
 					&$2^7$   &2064512  &2 &1.23  &4.21e-11  &13&4.96 &2.81e-5 \\
 					&$2^8$   &8323200  &2 &6.88  &5.56e-11  &12&24.05&3.44e-5 \\
 					&$2^{9}$ &33423488 &2 &25.04 &6.16e-11  &13&11.92&2.81e-5 \\
 					\hline
 					\multirow{5}{*}{$2^{8}$}
 					&$2^6$   &1016064  &2 &0.54  &5.69e-12  &13&2.16  &2.09e-5 \\
 					&$2^7$   &4129024  &2 &3.01  &1.05e-11  &13&11.92 &2.81e-5 \\
 					&$2^8$   &16646400 &2 &12.54 &1.60e-11  &13&53.46 &3.08e-5 \\
 					&$2^{9}$ &66846976 &2 &49.89 &1.55e-11  &13&203.58&3.16e-5 \\
 					\hline
 					\multirow{5}{*}{$2^{9}$}
 					&$2^6$   &2032128   &1 &0.91  &5.87e-8   &13&4.91  &2.09e-5 \\
 					&$2^7$   &8258048   &1 &4.69  &5.99e-8   &13&28.86 &2.81e-5 \\
 					&$2^8$   &33292800  &1 &18.78 &6.03e-8   &13&104.66&3.08e-5 \\
 					&$2^{9}$ &133693952 &1 &74.01 &6.05e-8   &13&406.46&3.16e-5 \\ 
 					\hline	
 				\end{tabular}
 			\end{center}
 		\end{table}

 	}
 \end{expl}
 \begin{expl}\label{varcoeffheateq}
 	{\rm The second example is also a heat equation but with variable diffusion coefficient function $a$, which is defined as follows
 		\begin{align*}
 		\Omega=&(0,1)\times(0,1),\quad T=1,\quad a(x,y)=10^{-5}\times\sin(\pi xy),\quad g\equiv 0,\quad u_0=x(x-1)y(y-1),\\
 		f(x,y,t)	=&\exp(-t)x(1-x)[2\sin(\pi xy)-y(1-y)-\pi\cos(\pi xy)x(1-2y)]+\\
 		&\exp(-t)y(1-y)[2\sin(\pi xy)-\pi\cos(\pi xy)y(1-2x)].
 		\end{align*}
 		Example \ref{varcoeffheateq} has the closed form analytical solution as follows
 		\begin{equation*}
 		u(x,y,t)=\exp(-t)x(1-x)y(1-y).
 		\end{equation*}
 		Hence, for Example \ref{varcoeffheateq}, we can measure the error of its numerical solution. For this purpose, we define the error function as follows
 		\begin{equation*}
 		{\rm E}_{N,J}=||{\bf u}_{\rm iter}-{\bf u}^{*}||_{\infty},
 		\end{equation*}
 		where ${\bf u}_{\rm iter}$ denotes the iterative solution of the linear system \eqref{allatoncesystem}, ${\bf u}^{*}$ denotes the values of exact solution of the heat equation on the mesh. Since the exact solution of Example \ref{varcoeffheateq} is known, instead of ${\rm RES}$, we use ${\rm E}_{N,J}$ to measure the accuracy of GMRES-BC and GMRES-BEC. The temporal derivative in Example \ref{varcoeffheateq} is discretized by the BDF \eqref{allatoncesystem}. Notice that ${\bf B}_{k}'s$ in \eqref{blockdiaginv} arising from Example \ref{varcoeffheateq} is no longer diagonalizable by sine transform. Hence, for Example \ref{varcoeffheateq}, instead of solving \eqref{blockdiaginv} exactly, we approximately solve it by one iteration of V-cycle geometric multigrid method, in which ILU smoother is employed with one time of pre-smoothing and one time of post-smoothing; the piecewise linear interpolation and its transpose are used as the interpolation and restriction operators (see \cite{ifiss}). The results of GMRES-BEC and GMRES-BC for solving Example \ref{varcoeffheateq} are listed in Table \ref{table2}.
 		
 		From Table \ref{table2} shows that (i) GMRES-BEC is much more efficient than GMRES-BC in terms of CPU and iteration number;  (ii) the iteration number of GMRE-BEC keeps bounded as $N$ and $J$ changes. That means introducing the parameter $\epsilon$ indeed help improve the performance of BC preconditioner on Example \ref{varcoeffheateq}. 
 		\captionsetup[subfigure]{labelformat=empty}
 		\begin{table}[H]
 			\begin{center}
 				\caption{Performance of GMRES-BC and GMRES-BEC on Example \ref{varcoeffheateq}}\label{table2}
 				\setlength{\tabcolsep}{0.7em}
 				\begin{tabular}[c]{ccc|ccc|ccc}
 					\hline
 					~&~&~& \multicolumn{3}{c|}{GMRES-BEC}& \multicolumn{3}{c}{GMRES-BC}\\
 					\cline{4-9}
 					$N$&$J+1$&DoF&Iter&CPU&${\rm E}_{N,J}$&Iter&CPU&${\rm E}_{N,J}$\\
 					\hline
 					\multirow{4}{*}{$2^{6}$}
 					&$2^6$   &254016  &3&1.29 &2.95e-4&72 &9.07   &2.95e-4\\
 					&$2^7$   &1032256 &3&2.20 &3.05e-4&78 &34.10  &3.04e-4\\
 					&$2^8$   &4161600 &2&7.47 &3.07e-4&87 &162.58 &3.07e-4\\
 					&$2^{9}$ &16711744&2&44.00&3.08e-4&133&1124.33&3.08e-4\\
 					\hline
 					\multirow{4}{*}{$2^{7}$}
 					&$2^6$   &508032  &3&0.93 &1.41e-4  &72 &16.98  &1.42e-4\\
 					&$2^7$   &2064512 &3&3.40 &1.51e-4  &78 &65.01  &1.51e-4\\
 					&$2^8$   &8323200 &2&13.51&1.53e-4  &87 &328.69 &1.53e-4\\
 					&$2^{9}$ &33423488&2&64.77&1.54e-4  &133&2214.82&1.54e-4\\         
 					\hline
 					\multirow{4}{*}{$2^{8}$}
 					&$2^6$   &1016064  &3&1.78  &6.43e-5  &72 &30.96  &6.50e-5\\
 					&$2^7$   &4129024  &3&6.60  &7.39e-5  &78 &126.79 &7.39e-5\\
 					&$2^8$   &16646400 &2&23.04 &7.63e-5  &87 &638.34 &7.96e-5\\
 					&$2^{9}$ &66846976 &2&115.18&7.69e-5  &133&4479.13&8.38e-5\\ 
 					\hline
 					\multirow{4}{*}{$2^{9}$}
 					&$2^6$   &2032128  &3&3.53  &2.57e-5  &72 &60.84  &2.65e-5\\
 					&$2^7$   &8258048  &3&13.45 &3.54e-5  &78 &260.44 &3.55e-5\\
 					&$2^8$   &33292800 &2&45.24 &3.78e-5  &87 &1251.80&7.97e-5\\
 					&$2^{9}$ &133693952&2&217.20&3.84e-5  &133&8996.60&8.39e-5\\	                                             
 					\hline	
 				\end{tabular}
 			\end{center}
 		\end{table}	
 		
 		To visualize the numerical solution of Example \ref{varcoeffheateq}, we present its surface plot and contour plot in Figure \ref{varheatsolfig}.
 		\begin{figure}[H]
 			\centering
 			\subfloat[Surface plot]{\includegraphics[width=2.2in]{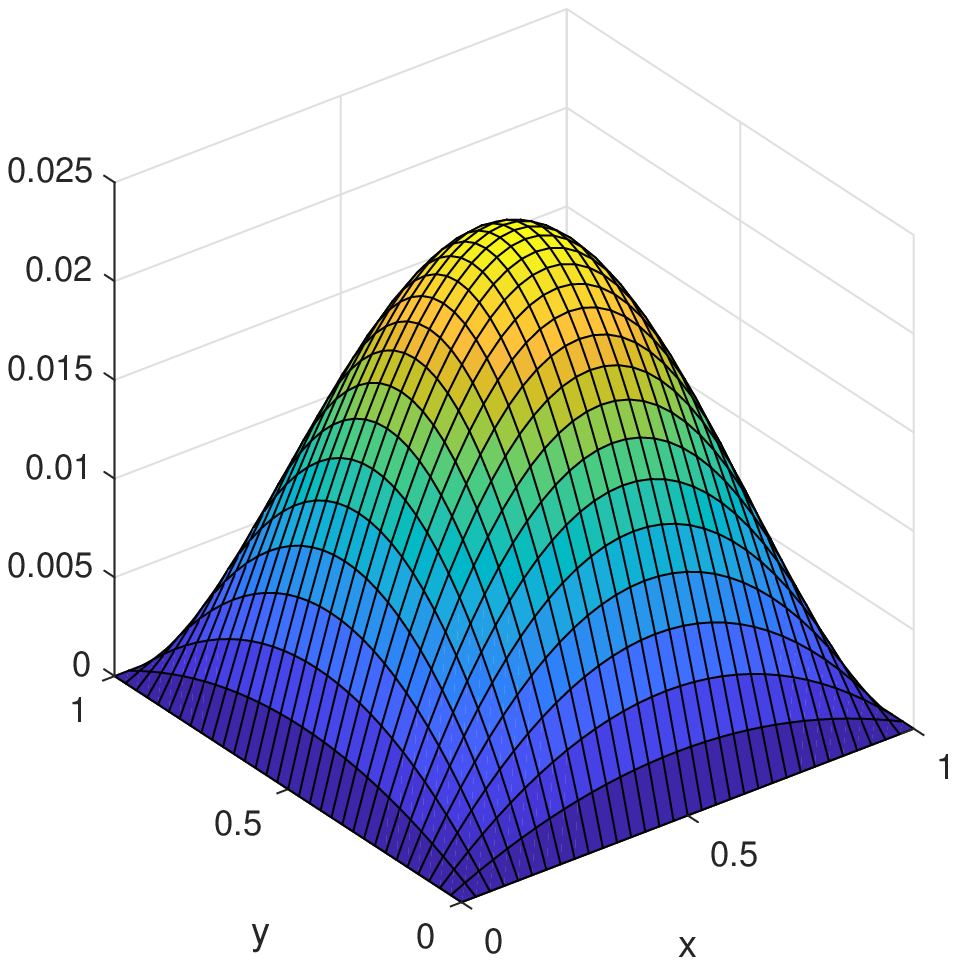}}\qquad
 			\subfloat[Contour plot]{\includegraphics[width=2.2in]{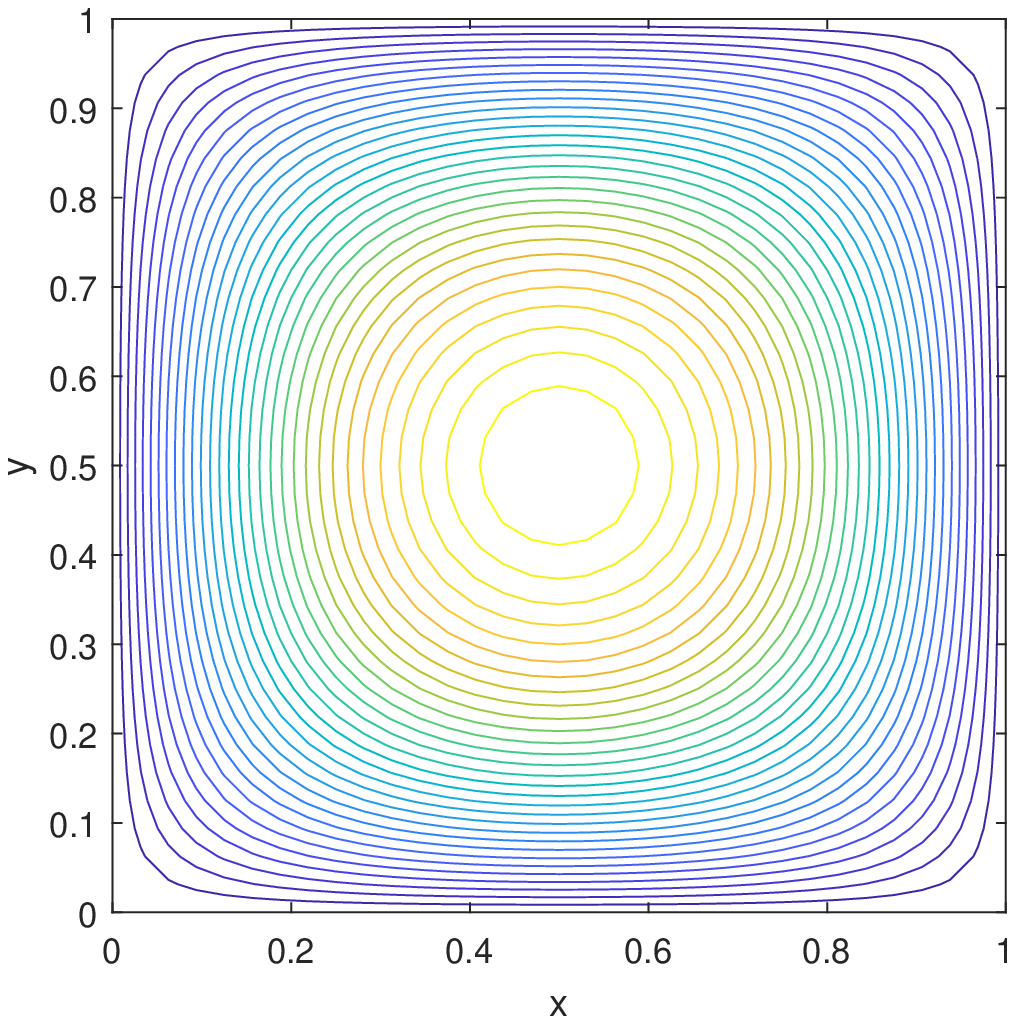}}
 			\caption{Numerical solution of Example \ref{varcoeffheateq} at final time $T$ by GMRES-BEC with $N=20$ and $J=31$}\label{varheatsolfig}
 		\end{figure}
 	}
 \end{expl}
 
 \begin{expl}\textnormal{(see \cite{mcdonald2018preconditioning})}\label{cdeq}
 	{\rm 
 		The third example is an evolutionary convection diffusion equation with circulating wind and hot wall boundary, which is defined as follows
 		\begin{align*}
 		&\partial_t u(x,y,t)=\frac{1}{200}\Delta u-\overrightarrow{w}\cdot\nabla u, (x,y)\in\Omega:=(-1,1)\times (-1,1),\quad t\in(0,T],\\
 		&u(x,y,t)=(1-\exp(-10t))\phi(x,y),\quad (x,y)\in\partial\Omega,\\
 		&u(x,y,0)=0,\quad (x,y)\in\bar{\Omega},
 		\end{align*}
 		where $\overrightarrow{w}:=(2y(1-x^2),-2x(1-y^2))$ is the circulating wind, $\phi$ represents the hot wall boundary condition defined as follows
 		\begin{equation*}
 		\phi(x,y):=\begin{cases}
 		1,\quad x=1{\rm ~and~} (x,y)\in\partial\Omega,\\
 		0,\quad x\neq 1{\rm ~and~} (x,y)\in\partial\Omega.
 		\end{cases}
 		\end{equation*}
 		The steady-state version of Example \ref{cdeq} is given by \cite[Example 6.1.4]{elman2014finite}. The Streamline-upwind Petrov-Galerkin (SUPG) stabilization \cite{brooks1982streamline} is used to stabilize the discrete spatial terms. The temporal derivative in Example \ref{cdeq} is discretized by the BDF scheme \eqref{backwarddiff}.
 		We solve \eqref{blockdiaginv} arising from Example \ref{cdeq} by one iteration of V-cycle geometric multigrid method, in which ILU smoother is employed with one time of pre-smoothing and one time of post-smoothing; the piecewise linear interpolation and its transpose are used as the interpolation and restriction operators (see \cite{ifiss}). The results of GMRES-BEC and GMRES-BC for solving Example \ref{cdeq} are listed in Table \ref{table3}.
 		
 		Table \ref{table3} shows that (i) GMRES-BEC is more efficient than GMRES-BC on Example \ref{cdeq} in terms of CPU and iteration number; (ii) GMRES-BEC is more accurate than GMRES-BC in terms of {\rm RES}.

 			\begin{table}[H]
 			\begin{center}
 				\caption{Performance of GMRES-BC and GMRES-BEC on Example \ref{cdeq} with $T=1$}\label{table3}
 				\setlength{\tabcolsep}{0.8em}
 				\begin{tabular}[c]{ccc|ccc|ccc}
 					\hline
 					~&~&~& \multicolumn{3}{c|}{GMRES-BEC}& \multicolumn{3}{c}{GMRES-BC}\\
 					\cline{4-9}
 					$N$&$J+1$&DoF&Iter&CPU&{\rm RES}&Iter&CPU&{\rm RES}\\
 					\hline
 					\multirow{5}{*}{$2^{6}$}
 					&$2^6$   &254016   &5 &1.67 &6.51e-8   &20&2.72  &3.72e-7 \\
 					&$2^7$   &1032256  &5 &2.95 &1.44e-8   &21&9.60  &8.62e-8 \\
 					&$2^8$   &4161600  &5 &12.84&9.05e-9   &21&41.32 &4.44e-8 \\
 					&$2^{9}$ &16711744 &5 &66.79&3.07e-9   &21&189.99&1.70e-8 \\
 					\hline
 					\multirow{5}{*}{$2^{7}$}
 					&$2^6$   &508032   &5 &1.49  &3.43e-8  &21&4.87  &2.93e-7 \\
 					&$2^7$   &2064512  &5 &5.07  &1.16e-8  &21&17.73 &1.87e-7 \\
 					&$2^8$   &8323200  &5 &24.32 &9.57e-9  &22&83.85 &4.00e-8 \\
 					&$2^{9}$ &33423488 &5 &112.30&3.47e-9  &22&381.04&1.42e-8 \\
 					\hline
 					\multirow{5}{*}{$2^{8}$}
 					&$2^6$   &1016064  &5 &2.75  &1.79e-8   &21&9.59  &4.83e-7 \\
 					&$2^7$   &4129024  &5 &10.10 &1.10e-8   &22&37.98 &1.48e-7 \\
 					&$2^8$   &16646400 &5 &45.13 &1.09e-8   &22&160.84&7.56e-8 \\
 					&$2^{9}$ &66846976 &5 &214.81&3.86e-9   &22&752.86&2.67e-8 \\
 					\hline
 					\multirow{5}{*}{$2^{9}$}
 					&$2^6$   &2032128   &4 &5.25  &1.75e-7   &21&19.27  &7.41e-7 \\
 					&$2^7$   &8258048   &5 &19.62 &1.20e-8   &22&78.47  &2.37e-7 \\
 					&$2^8$   &33292800  &5 &85.36 &1.31e-8   &22&334.48 &1.24e-7 \\
 					&$2^{9}$ &133693952 &5 &404.25&4.56e-9   &22&1469.73&4.37e-8 \\ 
 					\hline	
 				\end{tabular}
 			\end{center}
 		\end{table}

 		Since the boundary condition of Example \ref{cdeq} converges to the steady state, one can expect that solution of Example \ref{cdeq} will be very close to its steady-state solution for sufficiently large $T$. To observe this, we present the numerical solution of Example \ref{cdeq} at $T=200$ by GMRES-BEC in Figure \ref{cdeqsolfig}. Indeed, the numerical solution exhibited in Figure \ref{cdeqsolfig} is very closed to the numerical steady-state solution exhibited in \cite[FIG. 6.5]{elman2014finite}.
 		\captionsetup[subfigure]{labelformat=empty}
 		\begin{figure}[H]
 			\centering
 			\subfloat[Surface plot]{\includegraphics[width=2.2in]{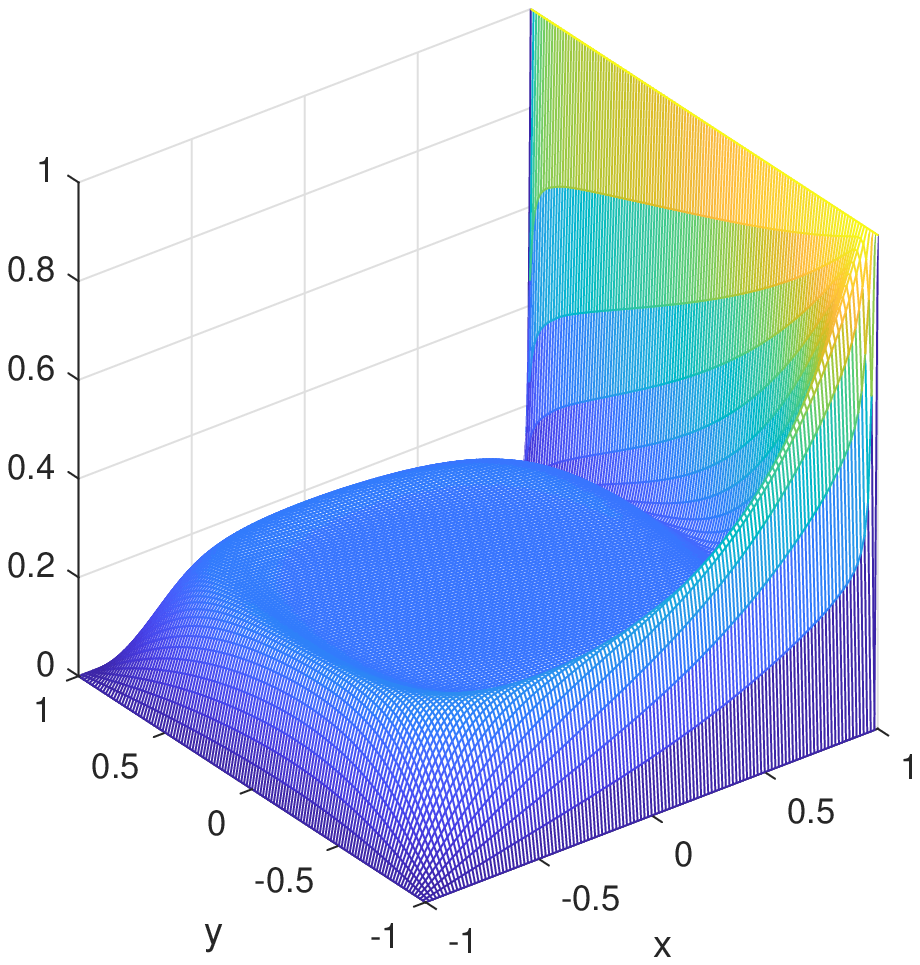}}\qquad
 			\subfloat[Contour plot]{\includegraphics[width=2.2in]{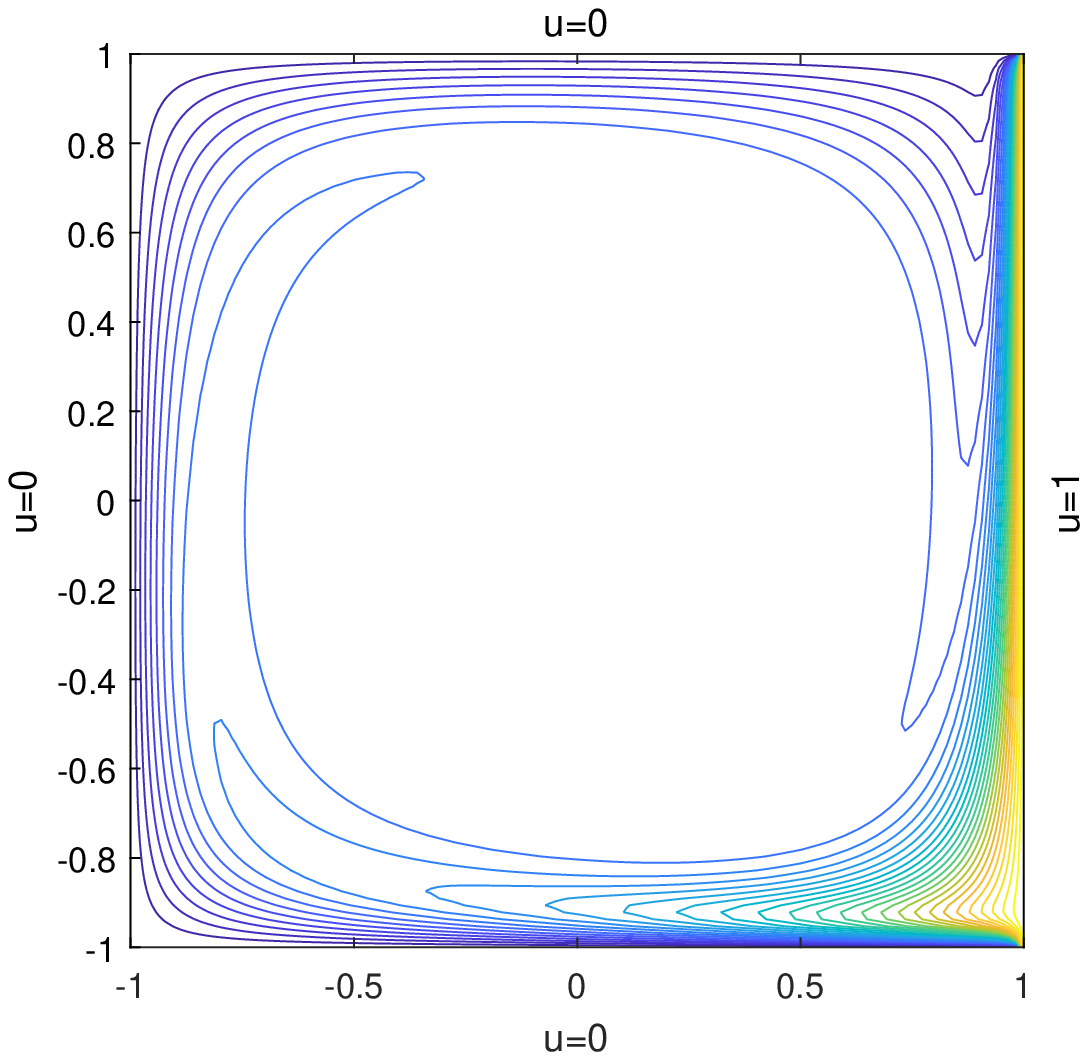}}
 			\caption{Numerical solution of Example \ref{cdeq} at time $T=200$ by GMRES-BEC with $N=200$ and $J=127$}\label{cdeqsolfig}
 		\end{figure}
 	}
 \end{expl}
 
 \section{Concluding Remark}
 In this paper, we have proposed the BEC preconditioner as a generalization of BC preconditioner for all-at-once system arising from evolutionary PDEs by introducing a positive parameter $\epsilon$ into the top-right corner of BC preconditioner. We have shown that such generalization preserves the diagonalizability, identity-plus-low-rank decomposition of the preconditioned matrix. Moreover, when $\epsilon$ is sufficiently small, we have shown that (i) the preconditioned matrix by BEC preconditioner has all eigenvalues clustered at $1$; (ii) GMRES for the preconditioned system by BEC preconditioner has a linear convergence rate independent of matrix-size. A fast implementation has been introduced so that the computational complexity required for implementation of BEC preconditioner stays the same as that for BC preconditioner. Numerical results have shown that BEC preconditioner improves the performance of the BC preconditioner.
 
\bibliographystyle{siam}
\bibliography{myreference}
\end{document}